\newtheorem{thm}[subsection]{Theorem}
\newtheorem{prop}[subsection]{Proposition}
\newtheorem{cor}[subsection]{Corollary}
\newtheorem{lemma}[subsection]{Lemma}
\newtheorem{remark}[subsection]{Remark}
\newtheorem{assumption}[subsection]{Assumption}
\newcommand{\R}{\mathbb{R}}
\newcommand{\CP}{\mathbb CP}
\newcommand{\Z}{\mathbb{Z}}
\newcommand{\C}{\mathbb{C}}
\newcommand{\N}{\mathbb{N}}
\newcommand{\U}{\mathcal{U}}
\newcommand{\W}{\mathcal{W}}
\newcommand{\cl}{\Omega^2_{\textrm{cl}}}
\newcommand{\ex}{\Omega^2_{\textrm{ex}}}
\DeclareMathOperator{\sgn}{sgn}
\DeclareMathOperator{\inter}{int}
\DeclareMathOperator{\Diff}{Diff}
\DeclareMathOperator{\colim}{colim}
\DeclareMathOperator{\Ext}{Ext}
\DeclareMathOperator{\Supp}{Supp}
\DeclareMathOperator{\Tot}{Tot}
\DeclareMathOperator{\id}{id}
\DeclareMathOperator{\Gr}{Gr}
\DeclareMathOperator{\Emb}{Emb}
\DeclareMathOperator{\Sing}{Sing}
\DeclareMathOperator{\grph}{Graph}
\begin{document}

\title{Symplectic embeddings in infinite codimension}

\author{Manuel Ara\'ujo} 
\author{Gustavo Granja}
\address{Mathematical Institute, University of Oxford, Andrew Wiles Building, Oxford OX2 6GG,UK}
\email{henriquesdea@maths.ox.ac.uk}
\address{Departamento de Matem\'atica, Instituto Superior T\'ecnico, 1049-001 Lisboa, Portugal}
\email{ggranja@math.ist.utl.pt}
\date{\today}

\subjclass[2010]{Primary 58D10; Secondary 57R17, 58A12}

\begin{abstract}
Let $X$ be a union of a sequence of symplectic manifolds of increasing dimension and let $M$ be a 
manifold with a closed $2$-form $\omega$. We use Tischler's elementary method for constructing symplectic 
embeddings in complex projective space to show that the map from the space of embeddings of $M$ in $X$ to 
the cohomology class of $\omega$  given by pulling back the limiting 
 symplectic form on $X$  is a weak Serre fibration. 
 Using the same technique we prove that, if $b_2(M)<\infty$, any compact family of 
 closed $2$-forms on $M$ can be obtained by restricting a standard family of forms on 
 a product of complex projective spaces along a family of embeddings.
\end{abstract}

\maketitle

\section{Introduction}
Whitney's embedding theorem states that any manifold $M$ can be embedded in $\R^N$ when $N$ is sufficiently 
large. As $N$ increases, the connectivity of the space of embeddings 
of $M$ in $\R^N$ tends to infinity, so that the space of embeddings of $M$ in $\R^\infty = \cup_n \R^n$ (defined as the 
union of the spaces of embeddings in $\R^n$) is contractible. Thus, the embedding of  $M$ in $\R^\infty$ is
``homotopy unique".
 
It is natural to look for universal spaces into which manifolds with added geometric structure can be embedded. 
One early example of a successful such search is Narasimhan and Ramanan's construction \cite{NR} of universal 
connections. A recent example is the work of Marrero, Martínez-Torres and Padrón 
\cite{MMP} where the cases of $1$-forms and locally conformal 
symplectic structures are treated.
 
In this paper we consider the problem of finding a universal space for embedding a manifold $M$ with the
added structure given by a closed $2$-form $\omega$. There are elementary reasons why a space
$(X,\omega_X)$  playing the role of $\R^\infty$ in the first paragraph can not exist. Indeed, the
map
$$
\begin{array}{ccc}
\pi_0 \Emb(S^2,X) &  \to & \R \\
\phi & \mapsto & \int_{S^2} \phi^*\omega_X 
\end{array}
$$
would need to be surjective, precluding $X$ from being the union of a sequence 
of (second countable, Hausdorff) manifolds.

However, if we impose suitable restrictions on the cohomology class of $\omega$, universal spaces do exist.
Tischler proved in \cite{Ti} that if $\omega$ is integral (i.e. if $[\omega]$ is in the image of the map 
$H^2(M;\Z) \to H^2(M;\R)$) and $M$ is closed, then the role of $\R^N$ is played by $(\CP^N, \omega_{FS})$ where 
$\omega_{FS}$ denotes the standard Fubini-Study form on complex projective space.
In \cite{Po}, Popov showed how to extend Tischler's argument to the case when $M$ is not compact and he
also proved that any manifold with an exact $2$-form embeds in some $\R^{2N}$ with the standard 
symplectic form. Popov and Tischler's results are immediate consequences of Gromov's celebrated h-principle for
 symplectic embeddings (see \cite[3.4.2 (B), p. 335]{Gr} and \cite[Theorem 12.1.1]{EM})
  but they are of a much more elementary nature. We also note that, unfortunately, a complete proof
  (or even statement) of Gromov's parametric h-principle for symplectic embeddings does not yet seem to be available
  in the literature (cf. \cite[Remark 3.3]{GK}).

In this paper we refine Tischler and Popov's arguments so as to obtain a parametric 
and relative version of their results. It can be phrased by saying that the pullback map from the space of embeddings of 
a manifold $M$ in an infinite dimensional symplectic manifold $X$ to the space of closed $2$-forms on $M$
in a given cohomology class is a (weak Serre) fibration. We note that this is a point set statement about a map,
and as such, can not be deduced from the statement of Gromov's parametric h-principle, which is a statement
about the weak homotopy type of a space.  This result suggests studying the space of closed $2$-forms
on a manifold in terms of the topology of spaces of embeddings, and we plan to pursue this in a 
future paper.

In order to state the result precisely  we will first establish some notation that 
will be fixed for the remainder of the paper. Let $(X_n,\omega_n)$ be a sequence of 
symplectic manifolds and $i_n \colon X_n \to X_{n+1}$ be closed embeddings of positive codimension 
preserving the symplectic forms.
Given a manifold $M$ together with a closed $2$-form $\omega$, we give the space $\Emb(M,X_n)$ of 
embeddings of $M$ in $X_n$ the weak Whitney topology (see \cite[Section 2.1]{Hi}). We write
$$ \Emb_\omega(M,X_n) = \{ \phi \in \Emb(M,X_n) \colon \phi^*\omega_n = \omega \} $$
for the subspace of embeddings preserving the $2$-form, and 
$$ \Emb_{[\omega]}(M,X_n) = \{ \phi \in \Emb(M,X_n) \colon \phi^*[\omega_n] = [\omega] \} $$
for the subspace of embeddings preserving the cohomology class of the $2$-form. Note that the latter is a union
of components of the space of embeddings. We write $\Emb_\omega(M,X)$ for the union (or
colimit) of the spaces $\Emb_\omega(M,X_n)$. Recall that a subset of the union is closed if its intersection with each
of the sets $\Emb_\omega(M,X_n)$ is closed. The space $\Emb_{[\omega]}(M,X)$ is defined and
topologized analogously. We give the space $\Omega^2(M)$ of $2$-forms on $M$  the 
subspace topology determined by the weak Whitney topology on $C^\infty(M,\Lambda^2(TM))$.
The subspaces of $\Omega^2(M)$ are given the induced topology.

\begin{thm}
\label{main1}
Let $M$ be a manifold and $\omega$ a closed $2$-form on $M$. Let 
$$\pi \colon \Emb_{[\omega]}(M,X)  \to [\omega]  \subset \Omega^2(M)$$ 
be the map defined by $\pi(\phi)=\phi^*\omega_n$ (where $n$ is such that $\phi \in \Emb_{[\omega_n]}(M,X_n)$).
 Then $\pi$ is a weak Serre fibration.
\end{thm}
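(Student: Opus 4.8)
The plan is to verify the defining homotopy lifting property of a weak Serre fibration directly, reducing it to a parametric realization of \emph{exact} perturbations of the pulled-back form, which I would then solve by a Moser-type flow in the ambient manifold, in the elementary spirit of Tischler's construction. Fix $k\geq 0$ and a commutative square given by $\Phi\colon D^k\to\Emb_{[\omega]}(M,X)$ together with a homotopy $H\colon D^k\times[0,1]\to[\omega]$ satisfying $H(\cdot,0)=\pi\circ\Phi$. Since $D^k$ is compact and $X$ carries the colimit topology, $\Phi(D^k)\subset\Emb_{[\omega]}(M,X_n)$ for some fixed $n$, so I may write $\Phi(d)=\phi_d\colon M\to X_n$ with $\phi_d^*\omega_n=H(d,0)$. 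The goal is a lift $\widetilde H$ with $\widetilde H(\cdot,0)=\Phi$ and $\pi\circ\widetilde H\simeq H$ rel $D^k\times\{0\}$; the construction below in fact yields an honest lift $\pi\circ\widetilde H=H$ whenever the relevant flow is complete.

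Next I would reduce to exact deformations with a continuous choice of primitives. As every $H(d,t)$ is cohomologous to $\omega$, the difference $H(d,t)-H(d,0)$ is exact, and I would select a continuous family $\beta_{d,t}\in\Omega^1(M)$ with $d\beta_{d,t}=H(d,t)-H(d,0)$ and $\beta_{d,0}=0$. Such a continuous selection exists because $d\colon\Omega^1(M)\to\Omega^2(M)$ is a continuous linear map of Fr\'echet spaces (weak Whitney topology, $M$ second countable) and primitives can be chosen continuously over the compact, finite-dimensional parameter space $D^k\times[0,1]$ by a Bartle--Graves or Michael selection argument; the normalization $\beta_{d,0}=0$ is arranged by subtracting $\beta_{d,0}$. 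The problem then becomes: realize, for each $d$, the path of forms $\phi_d^*\omega_n+d\beta_{d,t}$ as pullbacks along a path of embeddings starting at $\phi_d$.

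For the realization I would differentiate in $t$, writing $\dot\beta_{d,t}=\partial_t\beta_{d,t}$, and seek embeddings $\phi_{d,t}$ with $\phi_{d,0}=\phi_d$ obtained by an ambient isotopy so that $\partial_t(\phi_{d,t}^*\omega_n)=d\dot\beta_{d,t}$. Because $\omega_n$ is closed, for a vector field $V_{d,t}$ along $\phi_{d,t}$ one has $\partial_t\phi_{d,t}^*\omega_n=d\,\phi_{d,t}^*(\iota_{V_{d,t}}\omega_n)$, so it suffices to find $V_{d,t}$ with $\phi_{d,t}^*(\iota_{V_{d,t}}\omega_n)=\dot\beta_{d,t}$. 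Nondegeneracy of $\omega_n$ reduces this to extending $\dot\beta_{d,t}$ to a $1$-form on a neighborhood of the image in $X_n$ and taking its $\omega_n$-dual; integrating the resulting (after a cut-off, compactly supported) time-dependent vector field produces an ambient isotopy $\Psi_{d,t}\in\Diff(X_n)$ and hence embeddings $\phi_{d,t}=\Psi_{d,t}\circ\phi_d$ with $\phi_{d,t}^*\omega_n=H(d,t)$. Setting $\widetilde H(d,t)=\phi_{d,t}$ solves the lifting problem, and continuity in $(d,t)$ follows from smooth dependence of the flow on parameters; note $\phi_{d,t}^*\omega_n\in[\omega]$, so the lift indeed lands in $\Emb_{[\omega]}(M,X)$.

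The hard part will be carrying out the realization when $M$ is non-compact: then $\dot\beta_{d,t}$ cannot be extended with compact support while keeping the flow under control, so the Moser isotopy need not be complete inside a fixed $X_n$, and points of $\phi_{d,t}(M)$ may fail to stay embedded or may escape to infinity. This is precisely where the colimit structure and the positive codimension of the inclusions $i_n$ enter: following Tischler's and Popov's elementary device, I would first push the embedding into a higher stage $X_{n+m}$ whose additional symplectic, positive-codimension directions provide room to absorb $\dot\beta_{d,t}$---for instance by realizing the exact correction $d\beta$ as a pullback of a standard form in auxiliary coordinates---so that the time-dependent vector field can be taken complete and sufficiently transverse to keep each $\phi_{d,t}$ an embedding for all $t$ and all $d$ at once. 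Guaranteeing that this enlargement and flow can be performed continuously in $d$, uniformly on compacta, while preserving embeddedness, is the crux; granting this parametric realization of exact perturbations, the weak Serre fibration property follows as above.
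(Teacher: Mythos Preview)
Your approach diverges from the paper's and leaves the essential step undone. The paper does not use a Moser-type flow; it remarks explicitly that starting from an embedding is precisely what allows one to \emph{skip} Moser's lemma, which was the delicate step in Tischler's and Popov's original arguments. Instead, after choosing a continuous family of primitives $\eta(z)$ (the paper constructs a continuous linear deRham anti-differential for this in an appendix; your Bartle--Graves appeal is a reasonable alternative), the paper writes $d\eta(z)=\sum_{r,\alpha} dh^r_\alpha(z)\wedge dt^r_\alpha(z)$ with supports controlled by a colored locally finite cover of $A\times M$, makes the pairs $(h^r_\alpha,t^r_\alpha)$ arbitrarily $C^0$-small via a symplectic ``twisting'' trick on the disk, and then literally adjoins each pair as two new Darboux coordinates in successive $X_{N+1},X_{N+2},\ldots$ using the symplectic neighborhood theorem. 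Smallness guarantees the deformed map remains an embedding and lands in the chosen charts; no flow, no completeness argument, and the compact and non-compact cases are treated uniformly. Theorem~\ref{main1} then follows formally by specializing this relative parametric statement to $A=B\times[0,1]$ with $H$ initially constant.

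Your Moser argument, by contrast, has an unresolved circularity: the vector field $V_{d,t}$ is defined by extending $\dot\beta_{d,t}$ to a neighborhood of $\phi_{d,t}(M)$, which is the very family you are constructing. Even if this implicit ODE is set up properly, nothing guarantees the resulting isotopy stays within a fixed tubular neighborhood or preserves injectivity for all $t\in[0,1]$ inside a fixed $X_n$; indeed, producing the lift in a fixed $X_n$ would be strictly stronger than what the theorem asserts. Your final paragraph correctly identifies that the colimit and the positive codimension must be used, and proposes to ``realize the exact correction $d\beta$ as a pullback of a standard form in auxiliary coordinates'' following Tischler--Popov. But that \emph{is} the proof: once $d\beta$ has been so realized by adjoining coordinates, the lift is constructed and the Moser flow is superfluous. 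As written, the proposal defers the essential construction to a phrase rather than carrying it out.
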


A map is a weak Serre fibration if it has the weak homotopy covering property with respect to finite cell complexes
(see \cite{Do} or \cite[13.1.3]{St} and \eqref{whcp} below). This means that homotopies can
 be lifted after an initial vertical homotopy, or equivalently, that initially constant homotopies can be lifted. 
Since $[\omega]$ is contractible it follows that, as long as the space  $\Emb_{[\omega]}(M,X)$ is non empty,
 any map from a finite cell complex $A$ to $[\omega]$ can be lifted along $\pi$.
  Tischler and Popov's statements are recovered by taking $A$ to be a point.

The argument used in the proof of Theorem \ref{main1} also shows that the uncountability of $H^2(M;\R)$ is,
in a sense we will now explain, the only obstruction to the existence of a universal space for
 embedding manifolds with closed two forms.

Given a product of complex projective spaces $\CP^{n_1} \times \cdots \CP^{n_k}$, we write $\omega_i$ for the 
pullback of the standard Fubini-Study form via the $i$-th projection map, and $x_i \in 
H^2(\CP^{n_1}\times \cdots \times \CP^{n_k};\Z)$ for the class obtained by pulling back the canonical generator
of $H^2(\CP^{n_i};\Z)$ via the $i$-th projection. 

If $M$ is a manifold of dimension $n$ and $n_j>\frac{n}{2}$, then homotopy classes of maps from $M$ to 
$\CP^{n_1}\times \cdots \times \CP^{n_k}$ are classified by the pullbacks of the classes $x_i$. Given 
$\alpha_1,\ldots,\alpha_b \in H^2(M;\Z)$ we write  $\Emb_\alpha(M, (\CP^{m})^b \times \CP^\infty)$ 
for the space
$$\{ \phi \in \Emb(M, (\CP^{m})^b\times \CP^\infty)
 \colon \phi^*(x_i)=\alpha_i \text{ for } 1\leq i \leq b \text{ and } \phi^*(x_{b+1}) = 0 \} $$
\begin{thm}
\label{main2}
Let $M$ be a manifold with finite second Betti number $b=b_2(M)$. Let $\Omega^2_{cl}(M)$ denote 
the space of closed $2$-forms on $M$ and let $\alpha_1,\ldots, \alpha_b
 \in H^2(M;\Z)$ be a set of classes generating $H^2(M;\R)$. Suppose $m>\frac{\dim(M)}{2}$. Then the map 
$$ \pi \colon \Emb_{\alpha}(M, (\CP^{m})^b \times \CP^\infty) \times \R^b \to \Omega^2_{cl}(M) $$
defined by
$$ \pi(\phi,\lambda) = \phi^*(\lambda_1\omega_1 + \ldots \lambda_b \omega_b + \omega_{b+1}) $$
is a \emph{surjective} weak Serre fibration.
\end{thm}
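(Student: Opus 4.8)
The plan is to handle surjectivity and the weak Serre fibration property separately, reducing the second to Theorem \ref{main1} applied to $X=\CP^\infty$. For surjectivity, fix a closed $2$-form $\omega$. Since $b=b_2(M)=\dim_\R H^2(M;\R)$ and the $\alpha_i$ generate $H^2(M;\R)$, they form a basis, so I would write $[\omega]=\sum_{i=1}^b\lambda_i\alpha_i$ with $\lambda=(\lambda_1,\dots,\lambda_b)\in\R^b$ uniquely determined and depending continuously (indeed linearly) on $\omega$. As $\dim M<2m$, each integral class $\alpha_i$ is pulled back from the generator of $H^2(\CP^m;\Z)$ by some map $g_i\colon M\to\CP^m$ (push a classifying map $M\to\CP^\infty$ into the $2m$-skeleton by cellular approximation); put $\beta_i=g_i^*\omega_{FS}$, a closed form representing $\alpha_i$. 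Then $\omega-\sum_i\lambda_i\beta_i$ is exact, so by Tischler's construction applied to the trivial line bundle (whose vanishing Chern class makes the resulting map null-homotopic) it equals $\psi^*\omega_{FS}$ for a null-homotopic embedding $\psi\colon M\hookrightarrow\CP^\infty$. Setting $\phi=(g_1,\dots,g_b,\psi)$ gives a point of $\Emb_\alpha(M,(\CP^m)^b\times\CP^\infty)$ --- an embedding because its $\CP^\infty$-component $\psi$ already is --- with $\pi(\phi,\lambda)=\sum_i\lambda_i\beta_i+(\omega-\sum_i\lambda_i\beta_i)=\omega$.

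For the fibration property it suffices, as recalled after Theorem \ref{main1}, to lift homotopies out of a finite cell complex $A$ relative to a given initial lift. So let $h\colon A\times I\to\Omega^2_{cl}(M)$ be a homotopy and $\tilde h_0=(\phi^{(0)},\lambda^{(0)})\colon A\to\Emb_\alpha(M,(\CP^m)^b\times\CP^\infty)\times\R^b$ a lift of $h_0$. Comparing cohomology classes in $\pi(\phi^{(0)}(a),\lambda^{(0)}(a))=h(a,0)$ forces $\lambda^{(0)}(a)=\lambda(h(a,0))$, so I would take $\lambda_t(a)=\lambda(h(a,t))$, continuous and correct at $t=0$, as the $\R^b$-component of the lift. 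The key device is to \emph{freeze the finite factors}: keep $\phi_{i,t}(a)=\phi_i^{(0)}(a)$ for $1\le i\le b$ and set $\mu_t(a)=\sum_i(\lambda_t(a))_i\,\phi_i^{(0)}(a)^*\omega_{FS}$. Since $\mu_t(a)$ represents $\sum_i(\lambda_t(a))_i\alpha_i=[h(a,t)]$, the difference $h(a,t)-\mu_t(a)$ is a continuous family of \emph{exact} forms, equal at $t=0$ to $\phi_{b+1}^{(0)}(a)^*\omega_{FS}$.

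What remains is to produce a continuous family of null-homotopic embeddings $\phi_{b+1,t}(a)\colon M\hookrightarrow\CP^\infty$ with $\phi_{b+1,t}(a)^*\omega_{FS}=h(a,t)-\mu_t(a)$ and $\phi_{b+1,0}(a)=\phi_{b+1}^{(0)}(a)$. This is precisely the weak homotopy lifting problem for the map $\Emb_{[0]}(M,\CP^\infty)\to[0]$, that is, the instance of Theorem \ref{main1} with $X=\CP^\infty=\bigcup_n\CP^n$ (the standard inclusions being form-preserving closed embeddings of codimension $2$) and the exact cohomology class. Assembling $\tilde h(a,t)=(\phi_1^{(0)}(a),\dots,\phi_b^{(0)}(a),\phi_{b+1,t}(a),\lambda_t(a))$ then yields the desired lift, the embedding condition and the class conditions $\phi^*x_i=\alpha_i$, $\phi^*x_{b+1}=0$ all being inherited from the $\CP^\infty$-component and the frozen factors.

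I expect the real obstacle to sit entirely inside this last step --- the parametrized and \emph{relative} realization of a family of exact forms by embeddings into $\CP^\infty$ matching prescribed initial data --- which is the genuinely analytic heart shared with Theorem \ref{main1} and the place where Tischler's technique does the work. By contrast, the content specific to Theorem \ref{main2}, namely the continuity of $\omega\mapsto\lambda(\omega)$, the freezing of the $(\CP^m)^b$-factors, and the verification that the assembled map stays in $\Emb_\alpha(M,(\CP^m)^b\times\CP^\infty)$, should be routine once that realization is in hand.
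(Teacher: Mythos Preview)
Your argument is correct, but it takes a somewhat different route from the paper's. The paper does \emph{not} freeze the $(\CP^m)^b$-factors and reduce to Theorem~\ref{main1} for $\CP^\infty$; instead it sets $X_n=(\CP^m)^b\times\CP^n$ and re-runs the Tischler--Popov construction from the proof of Theorem~\ref{mainthm} verbatim on this sequence (this is packaged as Proposition~\ref{maintheorem2}), deforming the whole embedding into $(\CP^m)^b\times\CP^\infty$ rather than just its last component. Surjectivity is then obtained indirectly: once $\pi$ is a weak Serre fibration over the path-connected base $\Omega^2_{cl}(M)$ and the total space is nonempty, $\pi$ must be onto. Your approach is more modular---it invokes Theorem~\ref{main1} as a black box rather than reopening its proof---while the paper's is more uniform, treating Theorem~\ref{main2} as literally another instance of the same construction. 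One small point worth making explicit in your write-up: the condition $\phi^*x_{b+1}=0$ is an \emph{integral} condition, so ``$\phi_{b+1,t}(a)\in\Emb_{[0]}(M,\CP^\infty)$'' alone only gives vanishing in real cohomology; you should note (as you implicitly use) that $\phi_{b+1,t}(a)$ is homotopic to $\phi_{b+1}^{(0)}(a)$, which already satisfies the integral condition, and likewise in the surjectivity step that the initial embedding into $\CP^\infty$ can be chosen null-homotopic.
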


Since $\Omega^2_{cl}(M)$ is contractible, the above result implies that any map from a finite 
cell complex $A$ to $\Omega^2_{cl}(M)$ can be lifted along $\pi$. 
We interpret this as saying that the standard family of closed $2$-forms on $(\CP^\infty)^{b+1}$ 
$$  (\lambda_1, \ldots, \lambda_b) \mapsto \lambda_1 \omega_1 + \ldots + \lambda_b \omega_b + \omega_{b+1} $$
is versal for manifolds with second Betti number $\leq b$.

\subsection{Organization of the paper}
In Section 2 we prove a parametric and relative version of Tischler and Popov's result (Theorem
\ref{mainthm}) from which Theorem \ref{main1} follows immediately. We then make the 
necessary adaptations for varying cohomology class and prove Theorem \ref{main2}. 
 Section 3 gives some examples of computations of 
homotopy types of spaces of symplectic embeddings that follow easily from Theorem \ref{main1}, including 
Gal and Kedra's result (\cite [Theorem 3.2, Corollary 3.7]{GK}) that, for $\omega$ an integral closed $2$-form
on a connected manifold $M$, each connected component of  $\Emb_\omega(M,\CP^\infty)$ is homotopy equivalent to 
$(S^1)^{b_1(M)} \times \CP^\infty$. 

In order to prove Theorems \ref{main1} and \ref{main2} we need the fact that a continuous family of exact $2$-forms on 
a (not necessarily compact) manifold $M$ can be obtained by differentiating a continuous family
 of $1$-forms. Since we have not been able to find a suitable reference in the literature we have included
  a construction of a continuous deRham anti-differential in an appendix (see Theorem \ref{main}).

\subsection{Acknowledgements}
We thank João Paulo Santos for useful suggestions and David Mart\'\i nez-Torres for referring us to \cite{GK}. The 
second author gratefully acknowledges the support of the Gulbenkian Foundation.

\section{Proof of the main Theorems}

Let $(N,\omega)$ be a symplectic manifold of dimension $2n$. By a \emph{Darboux chart} for $N$ we will mean
a symplectomorphism  $\varphi \colon U \to V$ where $U \subset N$ is open and \emph{contractible}
 and $V$ is an open subset of $\R^{2n}$ with the standard symplectic form.

Recall that $(X_n,\omega_n)$ denotes a sequence of symplectic manifolds, and we assume there 
are symplectic closed embeddings $i_n\colon X_n \to X_{n+1}$ of positive
 codimension. In the examples we have in mind, the $X_n$  are either compact or $\R^{2n}$.

The idea of the proof of the Theorems of Tischler and Popov in the cases when
 $X_n=\CP^n$ (respectively $\R^{2n})$ is the following. Given a manifold $M$ and an 
integral (respectively exact) closed $2$-form $\omega$ on $M$, it is easy (by topological arguments
in the case of $\CP^n$ and trivially in the case of $\R^{2n}$) to produce a smooth 
map $f \colon M \to X_n$, for some $n$, such that $f^*[\omega_n]=[\omega]$.
One can then write $f^*\omega_n - \omega = \sum_k dh_k \wedge dt_k$ 
for some smooth functions $h_k,t_k \colon M \to \R$.
The proof proceeds by inductively adding the pairs of functions $(h_k,t_k)$ as
extra coordinates, starting with a map of the form $(f,h_1,t_1) \colon M \to X_{n+1}$
(in Darboux coordinates on $X_{n+1}$) which pulls back $\omega_{n+1}$ to $f^*\omega_n + dh_1\wedge dt_1$.
 For this to work it is important that the support of the functions $h_k,t_k$
and (in the case of $\CP^n$) their magnitude be suitably constrained.
In the end, one gets a symplectic map $g\colon M \to X_N$ for some $N>n$, 
which is necessarily an immersion. Finally, one applies Moser's lemma (requiring 
careful estimates in the non-compact case) and the density of
embeddings to turn the immersion $g$ into an embedding.

Our proof will start with an embedding instead of an arbitrary map and use the fact
that the perturbations made to the initial map will stay within embeddings. 
This will allow us to skip the application of Moser's lemma, but will otherwise consist of
obtaining parametric versions of the other steps in the proof and making use of the symplectic
neighborhood theorem to handle the case of general symplectic manifolds $X_n$.
This extra generality leads, in our view, to a considerable simplification of the proofs given in \cite{Ti,Po}.

Unless otherwise specified, the topology we give spaces of smooth functions is the weak Whitney topology.
We'll indicate the strong Whitney topology \cite[Section 2.1]{Hi} on functions by the subscript $s$. For instance
$C^\infty_s(M)$ denotes the space of smooth functions on $M$ with the strong topology. For $X$ and $Y$
topological spaces $C(X,Y)$ denotes the space of continuous maps from $X$ to $Y$ with the compact-open
topology and $C_s(X,Y)$ denotes the space of continuous maps with the strong topology (for which a basis
of open sets is given by $\{ f \in C(X,Y) \colon \grph(f) \subset U\}$ for $U$ an arbitrary open subset of $X\times Y$).

We will now state and prove our parametric and relative version of Tischler and Popov's theorem. 
Tischler and Popov's results correspond in the statement below to the cases when 
$A$ is a one point space, $U$ is the empty set and $X_n=\CP^n$ or $\R^{2n}$. 
\begin{thm}
\label{mainthm}
Let $M$ be a manifold and $\omega$ a closed $2$-form on $M$. Let 
$\pi \colon \Emb_{[\omega]}(M,X)  \to [\omega]  \subset \Omega^2(M)$ be the 
map defined by $\pi(\phi)=\phi^*\omega_n$.
Let $A$ be a compact subset of a smooth manifold and suppose given subspaces
 $B \subset U\subset A$ with $U$ open and $B$ closed. Then for each pair of
maps $f,g$ making the solid diagram commute
$$
\xymatrix{
& A  \ar[r]^-f &  \Emb_{[\omega]}(M,X) \ar[dd]^\pi \\
U \ar@{^{(}->}[ur] \ar@{^{(}->}[dr] & & \\
& A  \ar[r]^-g \ar@{-->}[ruu]^-h &  [\omega] 
}
$$
there exists a lift $h$ of $g$ which is homotopic to $f$ relative to $B$.
\end{thm}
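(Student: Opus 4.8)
The plan is to reduce the weak homotopy covering property to an explicit parametric perturbation of embeddings, carried out by adjoining symplectic coordinate pairs one at a time as in the Tischler--Popov argument sketched above, but performed continuously in $a \in A$ and relative to $B$. Since $A$ is compact and the $i_n$ are closed embeddings, $f(A)$ lands in $\Emb_{[\omega]}(M,X_n)$ for a single $n$, and I define the defect $\delta \colon A \to \ex(M)$ by $\delta(a) = g(a) - f(a)^*\omega_n$. Each $\delta(a)$ is exact, because $f(a)^*[\omega_n] = [\omega] = [g(a)]$, and $\delta$ vanishes on $U$ by commutativity of the solid diagram. Applying the continuous de Rham anti-differential of Theorem \ref{main} produces a continuous family $\beta \colon A \to \Omega^1(M)$ with $d\beta(a) = \delta(a)$. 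Fixing a locally finite atlas with coordinates $x^{(j)}_i$, a subordinate partition of unity $\{\rho_j\}$ (independent of $a$) taken fine enough that each $\rho_j$ is supported in a chart, and global functions $t_k$ agreeing with the relevant $x^{(j)}_i$ on $\Supp \rho_j$, I expand $\beta(a) = \sum_k h_k(a)\,dt_k$ with each $h_k(a)$ continuous in $a$ and compactly supported in a single chart domain, so that $\delta(a) = \sum_k dh_k(a) \wedge dt_k$ is a locally finite sum.

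Next I choose a cutoff $\chi \colon A \to [0,1]$ with $\chi \equiv 0$ on a neighborhood of $B$ and $\chi \equiv 1$ on $A \setminus U$; since $\delta$ vanishes on $U$, we have $\chi(a)^2 \delta(a) = \delta(a)$ for every $a$. The geometric heart of the argument is to realize each term $dh_k(a) \wedge dt_k$ as a displacement of the embedding into two fresh symplectic normal directions. Given a symplectic embedding $\psi(a) \colon M \to X_p$ whose pullback of $\omega_p$ is $\omega$ plus the already-adjoined terms, I pass to some $X_q$ with $q > p$ of codimension at least $2$ over $X_p$. By the symplectic neighborhood theorem together with Darboux's theorem I obtain an \emph{exact} flat Darboux chart of $X_q$ adapted to $X_p$, in which $X_p$ is a coordinate subspace $\{u = v = 0\}$, the symplectic form is exactly $(\text{flat form on } X_p) + du \wedge dv$, and $(u,v)$ is a spare symplectic pair. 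Displacing $\psi(a)$ within this chart by setting $u = \chi(a) h_k(a)$ and $v = \chi(a) t_k$, and leaving the $X_p$-coordinates unchanged, produces an embedding whose pullback acquires \emph{exactly} the term $\chi(a)^2\, dh_k(a) \wedge dt_k$, with no higher-order correction because the chart is genuinely flat; this is precisely what lets us dispense with Moser's lemma. The displacement occurs in new normal directions, so the result is again an embedding, and it vanishes wherever $\chi(a) = 0$, in particular near $B$.

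Adjoining the pairs one after another, each in a new spare symplectic pair provided by a further step up the tower, yields a symplectic embedding of $M$ into some $X_N$; because the codimensions of the $i_n$ are positive and the tower is infinite, arbitrarily many spare coordinates are available. For noncompact $M$ the sum over $k$ is only locally finite, but each point of $M$ meets finitely many supports, so the successive displacements assemble into a single continuous map into the colimit $X = \colim X_n$. Summing the contributions gives total pullback $f(a)^*\omega_n + \chi(a)^2\delta(a) = f(a)^*\omega_n + \delta(a) = g(a)$, so the resulting family $h \colon A \to \Emb_{[\omega]}(M,X)$ satisfies $\pi \circ h = g$.

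Finally, scaling all displacements by a parameter $s \in [0,1]$, that is, replacing $(u,v)$ by $(s\chi(a)h_k(a),\, s\chi(a)t_k)$, gives a homotopy $H_s$ with $H_0 = f$ and $H_1 = h$, whose pullback at level $s$ is $f(a)^*\omega_n + s^2\delta(a)$, hence always cohomologous to $\omega$; since every displacement vanishes near $B$, the homotopy is constant there, so $h$ is the required lift homotopic to $f$ relative to $B$. I expect the main obstacle to be the quantitative and parametric control in the realization step: choosing the partition of unity fine enough that each $\psi(a)(\Supp \rho_j)$ lies in a single Darboux chart, keeping the displacements $(\chi h_k, \chi t_k)$ small enough to remain inside these charts — a genuine magnitude constraint in the compact case $X_n = \CP^n$, where the Fubini--Study Darboux charts are bounded — and arranging all of this to depend continuously on $a$ while preserving the locally finite bookkeeping for noncompact $M$. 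These constraints are met by rescaling each pair via $(u,v) \mapsto (\lambda u, \lambda^{-1} v)$, which leaves $du \wedge dv$ unchanged while shrinking one factor into the admissible region, and by subdividing a single pair into several that each fit inside a Darboux chart, both of which are possible precisely because the tower supplies arbitrarily many symplectic manifolds $X_n$.
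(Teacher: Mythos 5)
Your overall strategy is the paper's: reduce to a single finite stage $X_N$ by compactness, take the continuous de~Rham anti\-differential of the defect $g(a)-f(a)^*\omega_N$, decompose the resulting $1$-form via a partition of unity into terms $h_k(a)\,dt_k$ supported in charts, cut off near $B$, and adjoin each pair as a displacement into fresh symplectic normal directions supplied by the symplectic neighborhood theorem, so that Moser's lemma is never needed. But there is a genuine gap at the step ``adjoining the pairs one after another, each in a new spare symplectic pair provided by a further step up the tower.'' When $M$ is noncompact the index set of pairs is infinite, and if each pair consumes a new coordinate pair (a new stage of the tower), the resulting map does not land in any finite $X_N$: local finiteness on $M$ makes the formula pointwise well defined, but an element of $\Emb_{[\omega]}(M,X)=\bigcup_n\Emb_{[\omega]}(M,X_n)$ must have image in a \emph{single} $X_n$, and likewise the homotopy $A\times[0,1]\to\Emb_{[\omega]}(M,X)$ must factor through a finite stage. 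The missing idea is a coloring argument: since $A\times M$ has covering dimension at most $n+k$ ($n=\dim M$, $k$ the dimension of the ambient manifold of $A$), the locally finite cover can be refined and its index set partitioned into $n+k+1$ classes with pairwise disjoint members in each class; all pairs belonging to one class (and one value of $r=1,\dots,n$) can then be adjoined \emph{simultaneously} in the \emph{same} new coordinate pair, so only $C=n(n+k+1)$ extra stages are ever used and everything lands in $X_{N+C}$.

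A second, smaller gap is your treatment of the magnitude constraint. To keep the displaced image inside the (generally bounded) extended Darboux charts you must make \emph{both} functions of a pair uniformly small on each compact support while preserving $dh\wedge dt$; the anisotropic rescaling $(u,v)\mapsto(\lambda u,\lambda^{-1}v)$ shrinks one factor only, and the alternative of subdividing one pair into many small ones requires a number of fresh coordinate pairs that is unbounded over the (infinitely many) supports, which collides with the first gap. The paper resolves this with a non-injective symplectic map of a large disk into an arbitrarily small disk (Lemma~\ref{twisting}), composed with $(h_k,t_k)$; this is exactly the point at which the authors note Tischler's original argument is mistaken, so it deserves an explicit device rather than a rescaling remark. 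Finally, make sure the functions $t_k$ are themselves compactly supported in their charts (coordinate functions multiplied by a cutoff that is $1$ on $\Supp h_k$), or the displacement is not confined to the chart where the spare coordinates are defined.
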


The condition imposed on the space $A$ in Theorem \ref{mainthm} is an artifact of the proof of Lemma \ref{functions}
below. The main thing to keep in mind is that the class of allowable spaces $A$ includes all finite cell complexes 
(and even compact ENRs).

\begin{lemma}
\label{functions}
Let $A$ be a compact subset of a smooth manifold and $B\subset A$ a closed subset.
Let $M$ be a manifold of dimension $n$, and $\eta\colon A\to \Omega^1(M)$ be
 a continuous map. Assume $\{W_\alpha\}_{\alpha \in I}$ is a locally finite open cover of $A \times M$
such that, for each $\alpha$, there is a coordinate neighborhood $U$ of $M$ so that $W_\alpha \subset A \times U$.
Then there exist continuous functions $h^r_\alpha,t^r_\alpha \colon A \to C^\infty(M)$ for
 $r=1,\ldots,n$ and $\alpha \in I$, such that 
\begin{enumerate}[(i)]
\item For each $r=1,\ldots, n$ and $\alpha \in I$, the functions $(z,x) \mapsto h^r_\alpha(z)(x)$ 
and $(z,x) \mapsto t^r_\alpha(z)(x)$ 
are supported on $W_\alpha$,
\item If $\eta(z)=0$ for all $z$ in a neighborhood of $B$, then $h_\alpha^r(z)=t^r_\alpha(z)=0$ for all $z\in B$,
\item $d\eta(z) = \sum_{\alpha \in I} \sum_{r=1}^n dh^r_\alpha(z)\wedge dt^r_\alpha(z)$.
\end{enumerate}
\end{lemma}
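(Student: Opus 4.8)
The plan is to reduce this global statement to a local coordinate computation by means of a partition of unity on $A\times M$, exploiting the elementary fact that in local coordinates $x_1,\dots,x_n$ a one-form $\sum_r a_r\,dx_r$ has exterior derivative $\sum_r da_r\wedge dx_r$, which already has the shape demanded in (iii). First I would realize $A$ as a compact subset of a smooth manifold $N$, so that $A\times M$ is a closed subset of the manifold $N\times M$ and smooth functions and partitions of unity are available by restriction; this is exactly the point of the hypothesis on $A$. Choosing a smooth partition of unity $\{\rho_\alpha\}_{\alpha\in I}$ subordinate to the locally finite cover $\{W_\alpha\}$, with $W_\alpha\subset A\times U_\alpha$ for a coordinate chart $U_\alpha$ carrying coordinates $x^\alpha_1,\dots,x^\alpha_n$, I would write, for each $z$, $\rho_\alpha(z,\cdot)\,\eta(z)=\sum_r f^r_\alpha(z)\,dx^\alpha_r$ on $U_\alpha$ and set $h^r_\alpha=f^r_\alpha$ (extended by zero). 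Since $\sum_\alpha\rho_\alpha\equiv 1$, summing the local identities over $\alpha$ and $r$ gives $\sum_{\alpha,r}df^r_\alpha(z)\wedge dx^\alpha_r=d\eta(z)$.

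To turn the second factor into a globally defined compactly supported function I would fix smooth cutoffs $\psi_\alpha$ that are identically $1$ on a neighborhood of $\operatorname{supp}\rho_\alpha$ and supported in $W_\alpha$, and tentatively set $t^r_\alpha=\psi_\alpha\,x^\alpha_r$ (again extended by zero outside the chart). Because $\psi_\alpha\equiv 1$ on $\operatorname{supp} f^r_\alpha$, one has $df^r_\alpha\wedge d(\psi_\alpha x^\alpha_r)=df^r_\alpha\wedge dx^\alpha_r$, so (iii) is unaffected, the support condition (i) is immediate for both families, and $h^r_\alpha=f^r_\alpha$ vanishes on $B$ whenever $\eta$ vanishes near $B$.

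The one genuine difficulty is the vanishing of the $t^r_\alpha$ required by (ii), since the coordinate functions $x^\alpha_r$ do not vanish on $B$. The key observation is that $df^r_\alpha(z)$ vanishes wherever $\eta(z)=0$, so in the identity of (iii) the factor $t^r_\alpha$ is only constrained where $d\eta(z)\neq 0$. I would therefore introduce a scalar cutoff $\chi\colon A\to[0,1]$ depending only on $z$, setting $t^r_\alpha(z)=\chi(z)\,\psi_\alpha(z,\cdot)\,x^\alpha_r$, chosen so that $\chi\equiv 1$ on the closed set $P=\overline{\{z:\eta(z)\neq 0\}}\supset\{d\eta\neq 0\}$ and $\chi|_B\equiv 0$ whenever $P\cap B=\emptyset$ (for instance $\chi(z)=\operatorname{dist}(z,B)/(\operatorname{dist}(z,B)+\operatorname{dist}(z,P))$, reverting to $\chi\equiv 1$ in the remaining case). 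Since $\chi\equiv 1$ on $\{d\eta\neq 0\}$, the computation of the previous paragraph still yields $\sum_{\alpha,r}dh^r_\alpha\wedge dt^r_\alpha=\chi\,d\eta=d\eta$, giving (iii); and when $\eta$ vanishes on a neighborhood $O$ of $B$ we have $P\subset A\setminus O$, hence $P\cap B=\emptyset$ and $\chi|_B=0$, giving the $t$-part of (ii).

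It then remains to check continuity of $z\mapsto h^r_\alpha(z),t^r_\alpha(z)$ into $C^\infty(M)$ with the weak Whitney topology and local finiteness of the sums. Continuity follows from continuity of $\eta$ (coefficient extraction $\eta\mapsto f^r_\alpha$ is continuous), smoothness of $\rho_\alpha$ and $\psi_\alpha$ on $N\times M$, and continuity of the scalar $\chi$, while local finiteness of $\{W_\alpha\}$ makes every sum locally finite. The step I expect to be the real obstacle is precisely the interaction between (ii) and (iii): one must prevent the cutoff $\chi$ from spoiling the identity in (iii), which is why it is essential that $\chi$ be forced to equal $1$ on all of $\{d\eta\neq 0\}$ while being free to vanish on $B$ only under the hypothesis of (ii).
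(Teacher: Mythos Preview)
Your argument is correct and follows essentially the same route as the paper: a partition of unity $\{\rho_\alpha\}$ subordinate to $\{W_\alpha\}$, the local coordinate expansion $\rho_\alpha\eta=\sum_r h^r_\alpha\,dx^\alpha_r$, a second bump $\psi_\alpha$ equal to $1$ on $\operatorname{supp}\rho_\alpha$ to globalize the coordinate functions, and finally a scalar cutoff in the $A$-direction to force the vanishing in (ii) without disturbing (iii). Your $\chi$ is exactly the paper's $1-\psi$ (the paper chooses $\psi$ supported in a neighborhood of $B$ on which $\eta$ vanishes, which is the complementary way of saying $\chi\equiv1$ on $\overline{\{\eta\neq0\}}$); you are simply a bit more explicit than the paper about the case where the hypothesis of (ii) fails.
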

\begin{proof}
Let $\{\rho_\alpha\}$ be a smooth partition of unity subordinate to the cover $\{W_\alpha\}$ of 
$A\times M$ (this makes sense because $A$ can be regarded as a closed subset of a smooth manifold). 
We can write
$\rho_\alpha \eta = \sum_{r=1}^n h^r_\alpha(z) ds^r_\alpha$ where $s^r_\alpha$ are local coordinates 
on some coordinate neighborhood $U$ so that $W_\alpha \subset A \times U$ and 
$h^r_\alpha \colon A\to C^\infty(M)$ are continuous functions such that the support of $(z,x) \mapsto 
h^r_\alpha(z)(x)$ is contained in the support of $\rho_\alpha$.

Let $\phi_\alpha \colon A \times M \to \R$ be a smooth cut-off function supported on $W_\alpha$ which is equal to $1$
on an open set containing the support of $\rho_\alpha$. 
Suppose $U$ is a neighborhood of $B$ where $\eta$ vanishes and let $\psi \colon A \to [0,1]$ be a continuous
function which is supported on $U$ and equal to $1$ on $B$. 
Setting $t^r_\alpha(z) = (1-\psi(z)) \phi_\alpha(z, \cdot) s^r_\alpha$  we obtain
the required expression: $d\eta(z) = \sum_\alpha d( \rho_\alpha\eta(z) )  = \sum_\alpha \sum_{r=1}^n 
dh^r_\alpha(z) \wedge dt^r_\alpha(z)$.
\end{proof}

We will need the fact that the functions produced in the previous lemma can be made arbitrarily small.
The next lemma will ensure this can be arranged. We note that the proof of this point given 
in \cite[Lemma 2 (3),(4)]{Ti} is mistaken.

\begin{lemma}(cf. \cite[12.1.5]{EM})
\label{twisting}
Let $D^2_s$ denote the disk of radius $s$ in $\R^2$.
Given $r,R>0$, there exists a smooth symplectic map $D^2_R \to D^2_r$ sending the origin to itself.
\end{lemma}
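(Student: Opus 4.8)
The plan is to work throughout in \emph{action--angle coordinates} and to exploit the fact that in dimension two a symplectic map is exactly an area-preserving one. The only genuine content is the case $r<R$, since for $r\ge R$ the inclusion already works. The crucial structural remark is that the map we seek \emph{cannot} be injective: an injective area-preserving map would send $D^2_R$ onto an open subset of $D^2_r$ of area $\pi R^2>\pi r^2$, which is impossible. Thus the map must be a non-injective immersion — a ``wrapping'' of the large disk around the small one — and this is precisely the point at which the naive argument of \cite[Lemma 2]{Ti} breaks down, since it implicitly produces a diffeomorphism onto its image.

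Next I would pass to the coordinates $u=\tfrac12(x^2+y^2)$ and the angle $\theta$, in which the standard form becomes $dx\wedge dy=du\wedge d\theta$ and the disk $D^2_s$ corresponds to the half-open cylinder $\{0\le u<s^2/2\}\times S^1$ with the origin collapsed at $u=0$. In these coordinates the problem becomes that of producing an area-preserving map $G$ of cylinders $[0,R^2/2)\times S^1\to[0,r^2/2)\times S^1$ which equals the identity on a neighbourhood of $\{u=0\}$. Such a $G$ then glues, via the common centre, to a smooth map of disks fixing the origin: the fact that $G$ is the identity near $u=0$ guarantees both smoothness at $0$ and the fixed-point condition.

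I would write $G$ down explicitly as a \emph{spiralling twist}, following \cite[12.1.5]{EM}. Near $u=0$ it is the identity; as $u$ increases it sends the circles $\{u=c\}$ to loops whose radial coordinate $v$ oscillates between $0$ and values strictly below $r^2/2$, so that the image stays inside the small disk. Each time such a loop passes through the centre $v=0$ the degree of the map around the origin increases by one, so the inner circles wind once while the outer ones wind about $R^2/r^2$ times — exactly the multiplicity forced by the area ratio $\pi R^2/\pi r^2$. Away from these centre-crossings the image loops are taken to be non-round, which is what allows the Jacobian to be kept identically equal to $1$. The verification then reduces to checking, from the explicit formula, that the Jacobian is everywhere $1$ and that the map is the identity near the core.

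The step I expect to be the main obstacle is exactly the behaviour at the origin. Near the centre the map is \emph{forced} by a degree argument to wind only once, so that it is a local diffeomorphism there and hence smooth; far out it must wrap roughly $R^2/r^2$ times in order to fit the area $\pi R^2$ into the small disk. Reconciling these two requirements — generating all the extra winding away from the origin, through the additional interior preimages of the centre, without destroying either smoothness at $0$ or the pointwise area-preservation — is the delicate part of the argument, and is precisely the place where the estimate in \cite{Ti} is mistaken.
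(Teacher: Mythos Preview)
Your plan is coherent and follows the spirit of \cite[12.1.5]{EM}, but it remains a sketch: you never write down the spiralling twist, and you explicitly flag the behaviour at the origin as an unresolved ``delicate part''. That is a genuine gap --- the proposal describes what the map must do (identity near the centre, increasing winding further out, Jacobian $\equiv 1$) without exhibiting a map that does it.

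The paper's proof takes a completely different and much shorter route that \emph{avoids} the origin entirely. Instead of building a map which is smooth at $0$ by making it the identity there, one simply \emph{conjugates by translations}: first translate $D^2_R$ by some $c$ with $|c|>R$ so that the whole disk lies in $\C\setminus\{0\}$; then apply a map of the form $z\mapsto z^{N+1}/(c_N|z|^N)$, which on $\C\setminus\{0\}$ is smooth and area-preserving (in polar coordinates it sends $(\rho,\theta)\mapsto(\rho/\sqrt{N+1},(N+1)\theta)$), so the image lies in an arbitrarily small disk for $N$ large; finally translate again to send the image of $0$ back to $0$. All three maps are symplectic and smooth on the relevant domains, so the composite is a smooth symplectic map $D^2_R\to D^2_r$ fixing the origin.

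The contrast is instructive: your approach tries to keep the origin fixed throughout and must therefore manufacture all the extra winding away from the centre, which is exactly the difficulty you identify. The paper's trick moves the origin out of the way first, so the wrapping map can be the most naive one --- a single explicit formula on $\C\setminus\{0\}$ --- and the ``origin sent to origin'' condition is restored afterwards by a translation. This buys a one-line proof in place of the delicate interpolation you were anticipating.
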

\begin{proof}
A map with the required properties can be obtained by composing a map $\C\setminus \{0\} \to \C \setminus\{0\}$
 of the form $z \mapsto \frac{z^{N+1}} {N|z|^N}$ (for a suitable $N \in \N$) with translations of $\C$ on both sides.
\end{proof}

\begin{proof}[Proof of Theorem \ref{mainthm}]
We will use capital letters to denote adjoint maps. For instance $ F \colon A \times M \to X $
denotes the map $(z,x) \mapsto f(z)(x)$. Since $A$ is compact and $\Emb_{[\omega]}(M,X)$ has the colimit topology,
 there exists $N$ such that the image of $F$ is contained in $X_N$. 

Let  $n=\dim M$, let $k$ be the dimension of a smooth manifold containing $A$, let
$d_j = \dim X_j$, and set $C=n(n+k+1)$.
 Let $\U_N = \{U_{N,\beta}\}_{\beta \in J}$ be a locally finite cover of $X_N$ 
by Darboux coordinate neighborhoods and $\phi_{N,\beta} \colon U_{N,\beta} \to \R^{2d_N}$
be corresponding Darboux charts.

By the symplectic neighborhood theorem we can pick successive extensions of 
the Darboux coordinates on $X_N$ to the manifolds $X_j$ which we denote
$$ \phi_{j,\beta} \colon U_{j,\beta} \to \R^{2d_j},  \quad \quad j=N+1, \ldots, N+C$$
The families $\U_j =  \{ U_{j,\beta}\}_{\beta \in J}$ are open covers of $X_N$ in $X_j$ 
satisfying $\U_j \cap X_{j'} = \U_{j'}$ for $j'<j$.
We will use the charts $\phi_{j,\beta}$ to construct a map $H \colon A \times M \to X_{N+C}$ whose adjoint will lift $g$.

Let $\W=\{W_\alpha\}_{\alpha \in I}$ be a locally finite refinement of the cover $F^{-1}(\U_N)$ 
with the property that each element of $\W$ has compact closure and is
contained in $A \times U$ for some coordinate chart $U$ on $M$.
We'll denote the refinement function by $\psi \colon I \to J$.
 Since $A\times M$ has covering dimension less than or equal to 
 $n+k$,  according to \cite[Lemma 2.7]{Mu} we can (by further
refining $\W$ if necessary)  partition the indexing set $I$ of $\W$ as
$$I = I_0 \coprod \ldots \coprod I_{n+k}$$
so that  if $\alpha, \alpha'\in I_i$ then $W_\alpha\cap W_{\alpha'} = \emptyset$.

By Theorem \ref{main} there is a continuous map $\eta \colon A \to \Omega^1(M)$ which vanishes on $U$
and satisfies, 
$$ d(\eta(z)) = g(z)-\pi(f(z)) = g(z)-f(z)^*\omega_N $$
for all $z\in A$. Lemma  \ref{functions} then provides continuous functions 
$$ h^r_\alpha, t^r_\alpha \colon A \to C^\infty(M), \quad  r=1,\ldots,n; \ \alpha \in I$$
such that 
\begin{enumerate}
\item $S_\alpha= \cup_{r=1}^n \left( \Supp H^r_{\alpha} \cup \Supp T^r_{\alpha}\right)$ is contained in $W_{\alpha}$,
\item $h^r_{\alpha}(z)=t^r_\alpha(z)=0$ for $z \in B$,
\item $d\eta(z) = \sum_{r,\alpha} dh^r_{\alpha}(z) \wedge dt^r_{\alpha}(z).$
\end{enumerate}
We are now in a position to complete the proof by constructing a homotopy 
$$ k \colon A \times [0,C] \to \Emb_{[\omega]}(M,X) $$
such that
\begin{enumerate}
\item $k(z,0)=f(z)$ for all $z \in A$,
\item $k(z,t)=f(z)$ for all $z \in B$,
\item  $\pi k(z,C)=g(z)$, for all $z \in A$.
\end{enumerate}
For each $m \in \{0,\ldots , n+k\}$ and $r \in \{1,\ldots n\}$ define
 $h^r_m =\sum_{\alpha \in I_m} h^r_\alpha$ and  $t^r_m =\sum_{\alpha \in I_m} t^r_\alpha$. Then
$$ d\eta(z) = \sum_{r=1}^n \sum_{m=0}^{n+k} dh^r_{m}(z) \wedge dt^r_{m}(z). $$

Note that the maps
$$(H^r_m,T^r_m) \colon A \times M \to \R^2 $$
are supported on a disjoint union of compact subsets of $A\times M$ indexed by the set $I_m$. Thus, 
if $\mathcal N$ is any neighborhood of $0$ in $C_s(A\times M,\R^2)$,
 Lemma \ref{twisting} allows us to construct from $h^r_m, t^r_m$ maps 
$$(\overline{H}^r_m,\overline{T}^r_m) \colon A \times  M \to \R^2 $$
so that 
\begin{enumerate}
\item $\overline{H}^r_m,\overline{T}^r_m \in \mathcal N$,
\item $\Supp (\overline{H}^r_m,\overline{T}^r_m) \subset \Supp (H^r_m,T^r_m) (\subset \cup_{\alpha \in I_m} S_\alpha)$,
\item $(\overline{h}^r_m,\overline{t}^r_m) \in C(A,C^\infty(M,\R^2))$,
\item $dh^r_m(z) \wedge dt^r_m(z) = d\overline{h}^r_m(z) \wedge d\overline{t}^r_m(z)$ for all $z \in A$.
\end{enumerate}

The homotopy $k \colon A \times [0,C] \to C^\infty(M,X)$ is constructed inductively as follows. 
We pick the neighborhood $\mathcal N$ of $0$ in $C_s(A \times M,\R^2)$ so small that the map 
$$ k_{|A \times [0,1]}  \colon A \times [0,1] \to C^\infty(M,X_{N+1}) $$
given by the formula
$$ k(z,s)(x) = 
\begin{cases}
\phi^{-1}_{N+1,\psi(\alpha)}(\phi_{N,\psi(\alpha)}(F(z,x)), s\overline{H}^1_0(z,x), s\overline{T}^1_0(z,x), 0, \ldots, 0) 
&  \text{if } (z,x) \in \cup_{\alpha \in I_0} S_\alpha   \\
F(z,x) & \text{otherwise.}
\end{cases}
$$
is well defined and satisfies
\begin{enumerate}
\item $k(z,s) \in \Emb_{[\omega]}(M,X_{N+1})$ for all $z \in A$ and $s \in [0,1]$,
\item $k(z,s)(S_\gamma) \subset U_{N+1,\psi(\gamma)}$ for all $z \in A, s \in [0,1]$ and $\gamma \in I$.
\end{enumerate}
This is possible, because (1) and (2) can be enforced by imposing a finite number of bounds on the 
functions $\overline{H}^1_\alpha,\overline{T}^1_\alpha$ on the compact set $S_\alpha$ for each $\alpha \in I_0$.

Then setting $g_1(z)=k(z,1)$ we have
$$ g_1(z)^*\omega_{N+1} = f(z)^*\omega_N + dh^1_0(z) \wedge dt^1_0(z). $$
Since $G_1(S_\gamma) \subset U_{N+1,\psi(\gamma)}$, we can proceed with the construction 
of the homotopy in the same way.
Namely we obtain $g_2=k(z,2) \colon A \to \Emb_{[\omega]}(M,X_{N+2})$ with $g_2(z)^*\omega_{N+2} = 
g_1(z)^*\omega_{N+1} + dh^2_0(z) \wedge dt^2_0(z)$ and $G_2(S_\gamma)
\subset U_{N+2,\psi(\gamma)}$ for all $\gamma \in I$, etc.
Setting $h(z)=k(z,C)$ we'll have $h(z)^*\omega_{N+C} = f(z)^*\omega_N + d\eta(z) = g(z)$ as required.
\end{proof}
 
\begin{remark}
The construction given in the previous proof shows that the lift $h$ can be chosen so 
that $H$ approximates $F$ arbitrarily in $C_s(A\times M,X_{N+C})$.
\end{remark}

We can now prove Theorem \ref{main1}. First recall from \cite{Do} that a
 map $\pi \colon E \to X$ is said to have the \emph{weak homotopy lifting property} with respect to a space $B$ 
if given a commutative diagram 
\begin{equation}
\label{whcp}
\xymatrix{
B \times 0 \ar@{^{(}->}[d] \ar[r]^-k & E \ar[d]^\pi \\
B \times [0,1] \ar[r]_-H & X
}
\end{equation}
there exists a homotopy $\tilde{H} \colon B \times [0,1] \to E$ such
 that $\pi \tilde{H} = H$ and $\tilde{H}_0$ is homotopic
to $k$ as maps over $\pi$ (i.e. $\tilde{H}_0$ is vertically homotopic to $k$). Clearly this
 is equivalent to the usual homotopy lifting property for homotopies $H$
which are initially constant (i.e. which, for some $\epsilon>0$, satisfy $H(b,t)=H(b,0)$ for all $b\in B$ and 
$t\leq \epsilon$).
See \cite{Do} and  \cite[13.1.3]{St} for more on the weak homotopy lifting property. A map $\pi$ which has
the weak homotopy lifting property with respect to all finite cell complexes is called a \emph{weak Serre fibration}.
Since any finite cell complex can be embedded in Euclidean space, the following Corollary finishes the 
proof of Theorem \ref{main1}.

\begin{cor}
\label{mainthm2}
The map $\pi \colon \Emb_{[\omega]}(M,X) \to [\omega] \subset \Omega^2(M) $ defined by 
$\pi(\phi)=\phi^*\omega_n$ has the weak homotopy lifting property with respect to compact subsets of smooth 
manifolds.
\end{cor}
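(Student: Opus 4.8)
The plan is to deduce the weak homotopy lifting property directly from Theorem \ref{mainthm}; since all the geometric content already resides there, this will be a purely formal manipulation. Let $K$ denote a compact subset of a smooth manifold, playing the role of $B$ in the diagram \eqref{whcp}, and suppose we are given $k \colon K\times 0 \to \Emb_{[\omega]}(M,X)$ together with a homotopy $H \colon K\times[0,1]\to[\omega]$ satisfying $\pi k = H|_{K\times 0}$. First I would invoke the remark following \eqref{whcp}, which reduces the weak lifting property to the ordinary homotopy lifting property for \emph{initially constant} homotopies; so I may assume there is an $\epsilon>0$ with $H(z,t)=H(z,0)$ for all $z\in K$ and $t\le\epsilon$, and it will suffice to produce a genuine lift $\tilde H$ of $H$ with $\tilde H_0=k$.

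Next I would feed this data into Theorem \ref{mainthm}, taking $A = K\times[0,1]$ (a compact subset of the product of $\R$ with the manifold containing $K$), the closed subset $B = K\times\{0\}$, and the open neighborhood $U = K\times[0,\epsilon)$. I would set $g = H \colon A \to [\omega]$ and let $f \colon A \to \Emb_{[\omega]}(M,X)$ be the $t$-independent extension $f(z,t)=k(z,0)$, which is continuous and lands in the space of embeddings because $k$ does. The commutativity hypothesis of Theorem \ref{mainthm} is then exactly where initial constancy of $H$ is used: on $U$ one has $\pi f(z,t)=\pi k(z,0)=H(z,0)=H(z,t)=g(z,t)$.

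Theorem \ref{mainthm} then delivers a lift $h \colon A \to \Emb_{[\omega]}(M,X)$ of $g$, so $\pi h = H$, which is moreover homotopic to $f$ relative to $B$. I would simply set $\tilde H = h$. Because a homotopy relative to $B$ is constant on $B=K\times\{0\}$, the maps $f$ and $h$ agree there, whence $\tilde H_0 = h|_{K\times 0} = f|_{K\times 0} = k$; thus $\tilde H$ restricts to $k$ on the nose, which is even stronger than the vertical homotopy required by the weak lifting property.

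I do not expect a genuine obstacle, as the substantive work is carried entirely by Theorem \ref{mainthm}. The only points demanding care are bookkeeping ones: correctly matching the weak-lifting formulation of \eqref{whcp} with the relative-lifting formulation of Theorem \ref{mainthm}, and in particular the reduction to initially constant homotopies, which is precisely what makes the constant extension $f$ compatible with $g = H$ over $U$. The one small observation worth flagging is that ``homotopic relative to $B$'' forces $h$ and $f$ to coincide on $B$, which is what upgrades the conclusion to $\tilde H_0 = k$.
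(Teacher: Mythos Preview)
Your proof is correct and follows essentially the same approach as the paper: reduce to initially constant homotopies, then apply Theorem \ref{mainthm} with $A=K\times[0,1]$, $U=K\times[0,\epsilon)$, $B=K\times\{0\}$, $g=H$, and $f(z,t)=k(z)$. Your observation that the relative homotopy forces $h|_B=f|_B$, hence $\tilde H_0=k$ on the nose, is exactly the point the paper leaves implicit.
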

\begin{proof}
Let $B$ be a compact subset of a smooth manifold. Given a homotopy 
$H\colon B \times [0,1] \to [\omega]$ which is constant
in the interval $[0,\epsilon]$ and a lift $k \colon B \to \Emb_{[\omega]}(M,X)$ of $H_0$, apply Theorem \ref{mainthm}
with $A=B \times [0,1], U=B \times [0,\epsilon[,B=B\times 0, g=H$ and $f(a,t)=k(a)$.
\end{proof}

We end this section by recording the following immediate consequence of Theorem \ref{main1}
\begin{cor}
\label{fiber}
Let $M$ be a manifold and $\omega \in \Omega^2(M)$ be a closed $2$-form. Then the inclusion 
$$ \Emb_{\omega}(M,X) \to \Emb_{[\omega]}(M,X) $$ is a weak homotopy equivalence. 
\end{cor}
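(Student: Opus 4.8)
The plan is to identify $\Emb_\omega(M,X)$ with the fiber of the weak Serre fibration $\pi$ of Theorem~\ref{main1} over the point $\omega$, and then to apply the standard fact that the inclusion of a fiber of a weak Serre fibration over a contractible base is a weak homotopy equivalence. Everything should follow formally from Theorem~\ref{main1}; no new geometric input is needed.

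First I would record the two structural observations that make this work. By the very definition of $\pi$ we have $\pi^{-1}(\omega)=\{\phi:\phi^*\omega_n=\omega\}=\Emb_\omega(M,X)$, so the map in question is precisely a fiber inclusion for $\pi$. Next, the base $[\omega]\subset\Omega^2(M)$ is the set of closed forms cohomologous to $\omega$, i.e. the affine subspace $\omega+\ex$; it is convex, and the straight-line contraction $(\omega',s)\mapsto (1-s)\omega'+s\omega$ stays inside $[\omega]$, so the base is contractible (this was already used in the remark following Theorem~\ref{main1}). If $\Emb_{[\omega]}(M,X)$ is empty there is nothing to prove, so I would assume it is nonempty.

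Next I would produce the homotopy exact sequence of $\pi$. Because $\pi$ is a weak Serre fibration it has the weak homotopy lifting property with respect to all finite cell complexes, in particular the disks and cubes, and this is exactly what is needed to run the usual argument showing that $\pi$ induces isomorphisms on relative homotopy groups $\pi_k\big(\Emb_{[\omega]}(M,X),\Emb_\omega(M,X),\phi\big)\xrightarrow{\ \cong\ }\pi_k\big([\omega],\omega\big)$ for every basepoint $\phi\in\Emb_\omega(M,X)$. Feeding this into the long exact sequence of the pair yields the fibration exact sequence relating the homotopy groups of $\Emb_\omega(M,X)$, $\Emb_{[\omega]}(M,X)$ and $[\omega]$. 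Since $[\omega]$ is contractible all of its homotopy groups vanish, forcing the inclusion to induce isomorphisms $\pi_k(\Emb_\omega(M,X),\phi)\cong\pi_k(\Emb_{[\omega]}(M,X),\phi)$ for all $k\ge 1$. For the statement on path components I would use weak path lifting: given any $\phi\in\Emb_{[\omega]}(M,X)$, choose a path in the contractible space $[\omega]$ from $\pi(\phi)$ to $\omega$ and lift it (after an initial vertical homotopy) to move $\phi$ into the fiber, which shows $\pi_0(\Emb_\omega(M,X))\to\pi_0(\Emb_{[\omega]}(M,X))$ is surjective; injectivity on $\pi_0$ comes from the same exact sequence. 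Together these give a weak homotopy equivalence.

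The step I expect to require the most care is the passage to the homotopy exact sequence, because $\pi$ is only a \emph{weak} Serre fibration: homotopies can be lifted only after an initial vertical homotopy, as in~\eqref{whcp}. I would therefore phrase every lifting argument in the ``initially constant homotopy'' form guaranteed by Corollary~\ref{mainthm2}, and check that the preliminary vertical homotopy---being confined to a single fiber---does not alter the relative classes under consideration. That this weaker lifting property still yields the usual exact sequence is classical (cf.~\cite{Do} and~\cite[13.1.3]{St}); making this bookkeeping explicit is the only real work in the proof.
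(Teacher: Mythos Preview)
Your proposal is correct and follows exactly the paper's own argument: identify the fiber, use convexity of $[\omega]$, and invoke the long exact sequence of homotopy groups, which still goes through for maps with the weak homotopy lifting property with respect to closed balls. The paper's proof is the two-sentence version of what you wrote, citing the same references \cite{Do}, \cite{St} for the fact about weak Serre fibrations.
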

\begin{proof}
The usual construction of the long exact sequence of homotopy groups in a fibration goes through for maps 
satisfying the weak homotopy lifting property with respect to closed balls. Since $[\omega]$ is convex, the 
result follows from Theorem \ref{main1}.
\end{proof}

\subsection{Proof of Theorem \ref{main2}}
We'll make use of the setup described just  before the statement 
of Theorem \ref{main2}. We write 
$$\psi \colon \cl(M) \to \R^b \cong H^2(M;\R) $$
for the map defined by the equation
$$ [\omega] = \psi_1(\omega) \alpha_1 + \ldots + \psi_b(\omega) \alpha_b $$
where $\alpha_i \in H^2(M;\Z)$ are classes whose images span $H^2(M;\R)$ (and we are still writing
$\alpha_i$ for their images in $H^2(M;\R)$).  We note that $\psi$ is continuous 
as its coordinate functions are given by integrating over appropriate $2$-cycles.
 Theorem \ref{main2} is an immediate consequence of the following version of Theorem \ref{mainthm}. 

\begin{prop}
\label{maintheorem2}
Let $M$ be a manifold with finite second Betti number $b=b_2(M)$. Let $\Omega^2_{cl}(M)$ denote 
the space of closed $2$-forms on $M$ and let $\alpha_1,\ldots, \alpha_b
 \in H^2(M;\Z)$ be a set of classes generating $H^2(M;\R)$. Suppose $m>\frac{\dim(M)}{2}$,
 and consider the map 
$$ \pi \colon \Emb_{\alpha}(M, (\CP^{m})^b \times \CP^\infty) \times \R^b \to \Omega^2_{cl}(M) $$
defined by
$$ \pi(\phi,\lambda) = \phi^*(\lambda_1\omega_1 + \ldots \lambda_b \omega_b + \omega_{b+1}) $$
Let $A$ be a compact subset of a smooth manifold and suppose given subspaces
 $B \subset U\subset A$ with $U$ open and $B$ closed. Then for each pair of
maps $f,g$ such that
$$ \pi_2 f = \psi g$$
and such that the solid diagram commutes
$$
\xymatrix{
& A  \ar[r]^-f &  \Emb_{\alpha}(M,(\CP^m)^b \times \CP^\infty)  \times \R^b \ar[dd]^\pi \ar[ddr]^{\pi_2} &   \\
U \ar@{^{(}->}[ur] \ar@{^{(}->}[dr] & & & \\
& A  \ar[r]^-g \ar@{-->}[ruu]^-h &  \Omega^2_{cl}(M) \ar[r]_{\psi} & \R^b \\
}
$$
there exists a lift $h$ of $g$ which is homotopic to $f$ relative to $B$.
\end{prop}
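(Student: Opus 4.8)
The plan is to run the proof of Theorem~\ref{mainthm} almost verbatim, the one essential modification being that all perturbations are confined to the $\CP^\infty$ factor of the target while the $(\CP^m)^b$ factors are held fixed. Write $f(z)=(\phi_z,\lambda_z)$ with $\phi_z=(\phi_{z,1},\dots,\phi_{z,b},\phi_{z,b+1})$, so that the hypothesis $\pi_2 f=\psi g$ reads $\lambda_z=\psi(g(z))$. First I would observe that this compatibility is exactly what forces the correction term to be exact: since $\phi_z^*(x_{b+1})=0$ gives $[\phi_{z,b+1}^*\omega_{FS}]=0$, we have
$$[\pi(f(z))]=\lambda_{z,1}\alpha_1+\dots+\lambda_{z,b}\alpha_b=\psi_1(g(z))\alpha_1+\dots+\psi_b(g(z))\alpha_b=[g(z)],$$
so the closed $2$-form $g(z)-\pi(f(z))$ is exact for every $z$, and it vanishes on $U$ because the solid diagram commutes there. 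Hence Theorem~\ref{main} provides a continuous $\eta\colon A\to\Omega^1(M)$ vanishing near $U$ with $d\eta(z)=g(z)-\pi(f(z))$, and Lemma~\ref{functions} decomposes $d\eta(z)=\sum_{r,\alpha}dh^r_\alpha(z)\wedge dt^r_\alpha(z)$ with all functions vanishing on $B$.

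The key structural point is that the coefficient of $\omega_{b+1}$ is the fixed constant $1$, so if the $\R^b$-coordinate and the first $b$ projective components are left unchanged, then altering only $\phi_{z,b+1}$ changes $\pi(f(z))$ by precisely the change in $\phi_{z,b+1}^*\omega_{FS}$. I would therefore build, exactly as in Theorem~\ref{mainthm}, an inductive homotopy
$$k\colon A\times[0,C]\to\Emb_\alpha(M,(\CP^m)^b\times\CP^\infty)\times\R^b$$
which keeps the $\R^b$-coordinate equal to $\psi(g(z))$, keeps $\phi_{z,1},\dots,\phi_{z,b}$ fixed, and successively adds the coordinate pairs $(h^r_m,t^r_m)$ in extra Darboux directions of the $\CP^\infty$ factor (compactness of $A$ puts $F$ into $(\CP^m)^b\times\CP^N$, and the symplectic neighborhood theorem extends the Fubini--Study Darboux charts to $\CP^{N+C}$). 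Lemma~\ref{twisting} keeps these additions small enough that the perturbed map remains an embedding and each $S_\gamma$ stays inside its chart, while the colouring of the index set coming from the covering dimension of $A\times M$ lets us add the pairs in finitely many mutually disjoint batches. Setting $h(z)=k(z,C)$ yields $\pi(h(z))=\pi(f(z))+d\eta(z)=g(z)$, and $k$ is a homotopy rel $B$ from $f$ to $h$.

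I would then verify that the homotopy never leaves $\Emb_\alpha$. Because $\phi_{z,1},\dots,\phi_{z,b}$ are never moved, $\phi^*(x_i)=\alpha_i$ is preserved for $i\le b$; and $\phi^*(x_{b+1})\in H^2(M;\Z)$ is a homotopy invariant depending continuously on $(z,s)$, hence locally constant, so it remains equal to its value $0$ at $s=0$. This is also where the choice of $\CP^\infty$ (rather than some $\R^{2n}$) is essential: the Fubini--Study form on the last factor is a genuine symplectic form whose \emph{bounded} Darboux charts are what make Lemma~\ref{twisting} applicable.

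The main obstacle, and the only genuine departure from Theorem~\ref{mainthm}, is conceptual rather than computational: the target $2$-form $\lambda_{z,1}\omega_1+\dots+\lambda_{z,b}\omega_b+\omega_{b+1}$ is neither fixed along $A$ nor, when some $\lambda_{z,i}\le 0$, even symplectic, so there is no single ambient symplectic manifold to which the earlier argument directly applies. The resolution is precisely the localisation above: since the entire perturbation lives in the $\CP^\infty$ factor, where the coefficient is the constant $1$, the degeneracy and the $z$-dependence of the form on the $(\CP^m)^b$ factors are irrelevant, and one runs the proof of Theorem~\ref{mainthm} with the fixed Fubini--Study form on $\CP^\infty$ as the operative symplectic structure. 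Finally, the surjectivity asserted in Theorem~\ref{main2} follows by applying the proposition with $A$ a point and $U=B=\emptyset$, once one notes that $\Emb_\alpha(M,(\CP^m)^b\times\CP^\infty)$ is nonempty (its $\CP^\infty$-component alone can be taken to be a Whitney embedding into an affine chart, realising $\phi^*(x_{b+1})=0$, while the classes $\alpha_i$ are realised by maps to the $\CP^m$ factors).
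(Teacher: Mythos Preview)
Your proposal is correct and follows essentially the same route as the paper. The paper's proof is terse: it writes $f=(f_1,f_2)$, fixes $h_2=k_2=\psi g$, sets $X_n=(\CP^m)^b\times\CP^n$, and says the construction of the homotopy in Theorem~\ref{mainthm} applies verbatim. You have unpacked what ``verbatim'' means here---namely that the extra Darboux directions added at each step lie in the $\CP^\infty$ factor, so the pullbacks of $\omega_1,\dots,\omega_b$ are untouched and only $\phi^*\omega_{b+1}$ acquires the correction $dh\wedge dt$; since the coefficient of $\omega_{b+1}$ is the constant $1$, this is exactly what is needed. Your observation that the hypothesis $\pi_2 f=\psi g$ is precisely what makes $g(z)-\pi(f(z))$ exact, and your check that the homotopy stays in $\Emb_\alpha$, are the points the paper leaves implicit.

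One minor remark: your aside that ``the choice of $\CP^\infty$ (rather than some $\R^{2n}$) is essential'' because bounded Darboux charts make Lemma~\ref{twisting} applicable has the logic reversed. Lemma~\ref{twisting} is needed \emph{because} the Fubini--Study Darboux charts are bounded; with $\R^\infty$ in the last slot the charts are global and no shrinking would be required. This does not affect your argument.
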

\begin{proof}
Write $f=(f_1,f_2)$ with  $f_1 \colon A \to  \Emb_{\alpha}(M,(\CP^m)^b \times \CP^\infty)$
and $f_2 =\psi g \colon A \to \R^b$. Similarly, write $h=(h_1,h_2)$ 
and $k=(k_1,k_2)$ for the homotopy  between $f$ and $h$ whose existence the Proposition asserts.

We set $h_2(a) = k_2(a,t)= \psi g(a)$ for all $a,t$. 
As in the proof of Theorem \ref{mainthm}, the map $f_1$ factors through
$\Emb_{\alpha}(M,(\CP^m)^b \times \CP^N)$ for some $N$. 
It suffices to construct a deformation $k_1$ of $f_1 \colon A \to \Emb_{\alpha}(M,(\CP^m)^b \times \CP^N)$ 
so that $\pi (k_1( a, 1), \psi(g(a))) = g(a)$.  Letting $X_n=(\CP^m)^b \times \CP^n$, 
the construction of the homotopy in the proof of Theorem \ref{mainthm} applies verbatim.
\end{proof}

\begin{proof}[Proof of Theorem \ref{main2}]
The proof that $\pi$ is a weak Serre fibration is exactly the same as in Corollary \ref{mainthm2} (define the second 
coordinate of $f(a,t)$ to be $\psi H(a,t)$). Since $\cl(M)$ is path connected and 
$\Emb_{\alpha}(M,(\CP^m)^b \times \CP^\infty) \neq \emptyset$, it follows that $\pi$ is surjective.
\end{proof}

\begin{remark}
It would be more satisfying if the second Betti number of the ambient 
space appearing in Theorem \ref{main2}  were equal to $b$. This can be arranged at the expense of excluding the
$0$ cohomology class. Writing $\ex(M)$ for the space of exact forms, Proposition \ref{maintheorem2} and 
Theorem \ref{main2} continue to hold if we replace $\Emb_\alpha(M, (\CP^m)^b\times \CP^\infty)$ with 
$ \Emb_\alpha(M, (\CP^\infty)^b)$ and $\cl(M)$ with $\cl(M)\setminus \ex(M)$. It suffices to make the following
changes in the proof: set $X_n = (\CP^n)^b$ and pick Darboux charts given by products of Darboux 
charts on $\CP^n$. In the first step of the inductive construction of the homotopy 
$k \colon A \times [0,C] \to \Emb_{\alpha}(M,X)$  
replace the expression
 $(F(z,x), s\overline{H}_0^1(z,x),s\overline{T}_0^1(z,x), 0, \ldots, 0)$
by 
$$(F(z,x), s \mu_1(z)\overline{H}_0^1(z,x),s \mu_1(z) \overline{T}_0^1(z,x),  s \mu_2(z) \overline{H}_0^1(z,x), 
\ldots,  s \mu_b(z) \overline{T}_0^1(z,x))$$
where 
$$ \mu_i(z)= \frac{\psi_i(g(z))}{\sum_{i=1}^b \psi_i(g(z))^2} $$
Proceed with the construction of the homotopy in the same way.
\end{remark}

\section{Examples of spaces of symplectic embeddings}
\label{apps}

In order to compare the homotopy type of the space of embeddings with the homotopy type of the space of 
continuous functions it is convenient to make the following additional assumption on the symplectic embeddings
$i_n \colon X_n \to X_{n+1}$.
\begin{assumption}
\label{connectivity}
The connectivity of
the symplectic embeddings $i_n \colon X_n \to X_{n+1}$ goes to $\infty$ with $n$ (i.e. for any given $N$,
 there exists $m$ so that $i_{n*} \colon \pi_k(X_n,*) \to \pi_k(X_{n+1},*)$ is an isomorphism
for all  basepoints, all $k\leq N$ and $n\geq m$).
\end{assumption}
Recall that for $X,Y$ topological spaces, $C(X,Y)$ denotes the space of continuous maps with the compact 
open topology. Under Assumption \ref{connectivity}, it follows from Whitehead's Theorem (see for instance
\cite[Theorem 5.1.32]{AGP}) that the canonical map
\begin{equation} 
\label{colimcont}
\colim_n C(M,X_n) \to C(M,X)
\end{equation}
is a weak equivalence for any manifold $M$. Moreover the cohomology classes $[\omega_n] \in H^2(X_n;\R)$ 
determine a unique element $[\omega_X] \in H^2(X;\R)$ pulling back to $[\omega_n]$ under the inclusions.
\begin{prop} 
\label{contmaps}
Let $M$ be a manifold, $\omega$ be a closed $2$-form on $M$ and assume the symplectic 
embeddings $i_n \colon X_n \to X_{n+1}$ satisfy Assumption \ref{connectivity}. Let 
$$C_{[\omega]}(M,X) = \{ f \in C(M,X) \colon f^*[\omega_X]=[\omega] \}.$$
Then the inclusion 
$$ \Emb_{\omega}(M,X) \to C_{[\omega]}(M,X)$$ 
is a weak homotopy equivalence. 
\end{prop}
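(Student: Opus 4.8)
The plan is to factor the inclusion as
$$\Emb_\omega(M,X) \to \Emb_{[\omega]}(M,X) \to C_{[\omega]}(M,X)$$
and prove that each arrow is a weak homotopy equivalence. The first arrow is exactly Corollary \ref{fiber}, so all the work lies in the second. The observation that makes this reduction useful is that the condition $f^*[\omega_X]=[\omega]$ is homotopy-invariant, so that both $\Emb_{[\omega]}(M,X)$ and $C_{[\omega]}(M,X)$ are unions of path components --- of $\Emb(M,X)$ and of $C(M,X)$ respectively --- cut out by one and the same $\pi_0$-level condition (here one uses that the inclusion $X_n \hookrightarrow X$ pulls $[\omega_X]$ back to $[\omega_n]$, so that for an embedding into $X_n$ the two notions of cohomology class agree). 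It therefore suffices to show that the plain inclusion $\Emb(M,X) \to C(M,X)$ is a weak equivalence inducing the matching bijection on the relevant components.

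To compare embeddings with continuous maps I would interpose smooth maps. By \eqref{colimcont}, which is where Assumption \ref{connectivity} enters, the map $\colim_n C(M,X_n) \to C(M,X)$ is a weak equivalence; and smooth approximation (see \cite{Hi}) makes each $C^\infty(M,X_n) \to C(M,X_n)$ a weak equivalence compatibly with the structure maps $i_n$, hence a weak equivalence $C^\infty(M,X) \to C(M,X)$ on colimits. What remains is to compare $\Emb(M,X)$ with $C^\infty(M,X)$.

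The heart of the argument is a parametric general position statement. Since the embeddings $i_n$ have positive codimension we have $\dim X_n \to \infty$, and consequently the connectivity of the inclusion $\Emb(M,X_n)\hookrightarrow C^\infty(M,X_n)$ grows without bound: a smooth family $D^k\times M \to X_n$ can be perturbed, relative to a subcomplex on which it already consists of embeddings, into a family of embeddings as soon as $\dim X_n$ exceeds $2\dim M + k$. Passing to the colimit this makes $\Emb(M,X) \to C^\infty(M,X)$ a weak equivalence. Concretely I would establish $\pi_k\big(C^\infty(M,X),\Emb(M,X)\big)=0$ by representing a relative class by a map $(D^k,\partial D^k) \to (C^\infty(M,X),\Emb(M,X))$, using compactness of $D^k$ to land in some $X_N$, pushing forward into $X_{N'}$ with $N'$ large, and applying parametric transversality rel $\partial D^k$.

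Finally I would assemble the pieces: restricting the weak equivalence $\Emb(M,X)\to C(M,X)$ to the components on which the cohomology class equals $[\omega]$ yields the weak equivalence $\Emb_{[\omega]}(M,X) \to C_{[\omega]}(M,X)$ (surjectivity on $\pi_0$ is automatic, since the $\pi_0$-bijection guarantees that every component of $C_{[\omega]}(M,X)$ contains an embedding), and composing with Corollary \ref{fiber} gives the statement. The main obstacle is the parametric general position step: making the transversality and perturbation argument precise in the weak Whitney topology for non-compact $M$, carrying it out relative to the boundary, and keeping all deformations inside a single cohomology component, together with the bookkeeping needed to reconcile the colimit topologies on the embedding and mapping spaces.
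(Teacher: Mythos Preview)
Your proposal is correct and follows essentially the same route as the paper: factor through $\Emb_{[\omega]}(M,X)$ via Corollary \ref{fiber}, interpose smooth maps using \eqref{colimcont} and smooth approximation, and then pass from smooth maps to embeddings by a density/general-position argument once the target dimension is large enough. The only cosmetic differences are that the paper works with the $[\omega]$-decorated spaces throughout and phrases the last step via the lifting criterion \eqref{lifting} together with density of embeddings in the strong topology \cite[Theorem 2.13]{Hi}, whereas you strip the cohomology decoration first and phrase the last step as vanishing of relative homotopy groups via parametric transversality; these amount to the same thing.
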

\begin{proof}
First recall that a map $\phi \colon X \to Y$ is a weak homotopy equivalence, if and only if for all $k\geq 0$ and 
all commutative squares
\begin{equation}
\label{lifting}
\xymatrix{
S^{k-1} \ar[r]^-{f} \ar@{^{(}->}[d] & X  \ar[d]^\phi \\
D^k \ar[r]_-{g} \ar@{-->}[ur]^h & Y 
}
\end{equation}
(where $D^k$ denotes the closed unit ball in $\R^k$) there exists a lift $h$ making the upper 
triangle commute and the lower triangle commute up to homotopy relative to
$S^{k-1}$ (see for instance \cite[Lemma in section 9.6]{Ma}).

In view of Corollary \ref{fiber} it suffices to show that each of the inclusions
 $i_1 \colon \Emb_{[\omega]}(M,X) \hookrightarrow C^\infty_{[\omega]}(M,X)$ and 
$i_2 \colon C^\infty_{[\omega]}(M,X) \hookrightarrow C_{[\omega]}(M,X)$ are weak equivalences
 (where spaces of smooth maps to $X$ mean the union of the corresponding spaces of
  smooth maps to the manifolds $X_n$).

In the case of $i_2$, since the map \eqref{colimcont} is a weak equivalence, it is enough to show 
that the inclusions $i_2 \colon C^\infty(M,X_n) \hookrightarrow C(M,X_n)$
are weak equivalences for all $n$. But the required lifts in \eqref{lifting} 
exist by the density of smooth maps in continuous
 maps in the strong topology (cf. \cite[Theorem 2.6]{Hi}).

In order to prove that the map $i_1$ satisfies the lifting conditions \eqref{lifting},
 note that compactness of $D^k$ implies that the image of $g$ is contained in $C^\infty(M,X_n)$
 for some $n$.  As long as the dimension of $X_n$ is large enough, 
the density of embeddings in smooth maps in the strong topology
(cf. \cite[Theorem 2.13]{Hi}) allows us to construct the required lift.
The reader is referred to \cite[Sections 3.2 and 3.3]{Ar} for more details of these arguments.
\end{proof}

\subsection{Examples}

\subsubsection{Embeddings in $\R^\infty$.} Let $X_n=\R^{2n}$, let $\omega_n$ denote the standard symplectic
 form on $X_n$ and $i_n \colon \R^{2n} \to \R^{2n+2}$ denote the canonical inclusions. Assumption 
\ref{connectivity} is obviously satisfied. In this case, Proposition \ref{contmaps} says that for $(M,\omega)$
 a  manifold with an exact $2$-form, the space $\Emb_\omega(M,\R^\infty)$ is contractible. 
For instance if $\omega=0$, this says that the space of isotropic embeddings of $M$ in $\R^\infty$
 is contractible.
Since the group $\Diff_\omega(M)$ of diffeomorphisms of $M$ which preserve $\omega$ acts freely 
on $\Emb_\omega(M,\R^{\infty})$, we see that, when $M$ is compact, the space of
 submanifolds of $\R^\infty$ which are
 diffeomorphic to $(M,\omega)$ provides a model for the classifying space $B\Diff_\omega(M)$.

\subsubsection{Embeddings in $\CP^\infty$.}  \label{cpinfty}
Let $X_n=\CP^n$, with $\omega_n$ the standard Fubini-Study form, and $i_n 
\colon \CP^n \to \CP^{n+1}$ denote the canonical inclusions.  Assumption \ref{connectivity} is satisfied, so
if $(M,\omega)$ is a manifold with a closed $2$-form, Proposition \ref{contmaps} says that the 
inclusion
$$\Emb_{\omega}(M,\CP^\infty) \hookrightarrow C_{[\omega]}(M,\CP^\infty)$$
is a weak homotopy equivalence (cf. also \cite[Theorem 3.2]{GK} where this result is proved for $M$ compact).

Since $\CP^\infty$ is an Eilenberg-Maclane space $K(\Z,2)$, the space $C(M,\CP^\infty)$ is
weakly equivalent to a product of Eilenberg-MacLane spaces
$$ C(M,\CP^\infty) = H^2(M;\Z) \times K(H^1(M;\Z),1) \times K(H^0(M;\Z),2).$$
This is a standard computation but since we were unable to find a suitable reference we'll sketch an argument:
$\CP^\infty$ is weakly equivalent to a topological abelian group $X$ (see for instance \cite[Corollary 6.4.23]{AGP}), hence $C(M,\CP^\infty)$ is weakly equivalent to $C(M,X)$. The singular complex $\Sing(C(M,X))$ is a
simplicial abelian group, and so, by the Dold-Kan correspondence and elementary properties of chain
complexes of abelian groups, is weakly equivalent to a simplicial abelian group $G=\prod_{k \geq 0} G_k$
with each $G_k$ a simplicial abelian group with only one simplex in degrees less than $k$ and 
$\pi_i(G_k)=0$ for $i\neq k$.  The geometric realization $|G|$ of $G$ is weakly equivalent to $C(M,\CP^\infty)$
and also to $\prod_k |G_k| = \prod_k K(\pi_k(G_k),k)$. The computation of the 
homotopy groups of $C(M,\CP^\infty)$ in terms of the cohomology of $M$ follows from the fact that $\CP^\infty$ 
classifies degree $2$ cohomology (cf. \cite[Lemma 3.6]{GK}).

If $\omega$ is an integral form,
 then the space $C_{[\omega]}(M,\CP^\infty)$ is the union
of the components corresponding to the class $[\omega] \in H^2(M;\R)$, so
$$ \Emb_\omega(M,\CP^\infty) \simeq \Ext(H_1(M),\Z) \times (S^1)^{\beta_1(M)} \times (\CP^\infty)^{\beta_0(M)}$$
where $\beta_0(M)$ denotes the number of connected components of $M$ and $\beta_1(M)$ the first Betti number
(which may be infinite).


In particular, if $(M,\omega)$ is simply connected (and integral), $\Emb_\omega(M,\CP^\infty) \cong \CP^\infty$
with the weak equivalence induced by evaluation at a given point in $M$. Once we fix a base point in 
$\CP^\infty$ the space $\Emb_{\omega,*}(M,\CP^\infty)$ of pointed embeddings is contractible and so, 
for $M$ compact, the quotient by the space $\Diff_{\omega}(M,*)$ of diffeomorphisms
 preserving $\omega$ and fixing the basepoint
provides a model for the classifying space $B\Diff_\omega(M,*)$.

\subsubsection{Embeddings in products of copies of $\CP^\infty$}
Let $M$ be a manifold with finite second Betti number and $\omega$ a closed $2$-form on $M$.  Let $r=r([\omega])$
be the least positive integer such that $[\omega]$ can be written as a real linear combination of $r$ elements in
$H^2(M;\Z)$. We can pick  $\alpha_i \in H^2(M;\Z)$ 
and $\lambda_i \in \R\setminus\{0\}$ for $i=1, \ldots, r$ such that 
$$ [\omega] = \lambda_1 \alpha_1 + \ldots + \lambda_r \alpha_r.$$ 
Consider the sequence of symplectic manifolds $X_n=(\CP^n)^r$ with symplectic form 
$\omega_n = \sum_{i=1}^r \lambda_i \omega_i$ where $\omega_i$ denotes the pullback of the standard 
Fubini-Study form via the $i$-th projection. Assumption \ref{connectivity} is satisfied
and therefore, by Proposition \ref{contmaps}, the classes $\alpha_i$ determine a connected
 component of  $\Emb_{\omega}(M,(\CP^\infty)^r)$. As in the previous example, the 
 weak homotopy type of this connected component is $(S^1)^{\beta_1(M)r} \times 
(\CP^\infty)^{\beta_0(M)r}$.

\subsubsection{Embeddings in $BU$}
Let $X_n=\Gr_n(\C^{2n})$ be the Grassmann manifold of $n$-dimensional complex subspaces of $\C^{2n}$. 
The manifolds $X_n$ admit canonical Pl\"ucker embeddings in $\CP^{2n \choose n}$ which are compatible with 
the standard inclusions $i_n \colon X_n \to X_{n+1}$ and
 $\CP^{2n \choose n} \subset \CP^{2(n+1) \choose n+1}$ 
(for a suitable ordering of the coordinates in projective space).
 The restriction of the Fubini-Study forms on the projective spaces
give rise to canonical symplectic forms $\omega_n$ on $X_n$ such that $i_n^* \omega_{n+1}=\omega_n$.
The form $\omega_n$ is integral and its cohomology class generates $H^2(X_n;\Z)\cong \Z$. 

The colimit $X$ of the inclusions $i_n$ is the classifying space for stable complex bundles usually denoted $BU$.
Assumption \ref{connectivity} is satisfied, so given a manifold $M$ and a closed 
$2$-form $\omega$ on $M$, Proposition \ref{contmaps} identifies the weak homotopy type of 
$\Emb_\omega(M,BU)$ with that of $C_{[\omega]}(M,BU)$.
The homotopy groups of this space can be computed in terms of (complex, representable) $K$-theory of $M$ since
$BU$ classifies reduced complex $K$-theory.

In more detail, there is a (total Chern class) map $c \colon BU \to K(\Z,2) \times K(\Z,4) \times \cdots$ classifying the
stable Chern classes which is an equivalence on rational cohomology. The composite of $c$ with the
projection onto $K(\Z,2) \cong \CP^\infty$
can be identified with the inclusion of $BU$ in $\CP^\infty$ determined by the Pl\"ucker embeddings. 
Therefore the set of connected components of $C_{[\omega]}(M,BU)$ is
$$\pi_0(C_{[\omega]}(M,BU))= \{ \alpha \in \tilde{K}^0(M) \colon c_1(\alpha)=[\omega] \}. $$
For this set to be non-empty, $[\omega]$ must of course be an integral class and since the map $BU \to K(\Z,2)$
admits a section, this is in fact the only condition needed to ensure the set is non-empty.

Since $BU$ has a group-like multiplication, all the connected components of $C_{[\omega]}(M,BU)$ are weakly
equivalent and the homotopy groups agree with those of the connected component corresponding to the 
constant map from $M$ to $BU$. Hence for each connected component $\alpha$ we have 
$$ \pi_i(\Emb_{\omega}(M,BU)_\alpha) = \begin{cases} K^1(M) & \text{if } i \text{ is odd,} \\
\tilde{K}^0(M) & \text{if } i>0 \text{ is even.}
\end{cases}$$

\appendix
\section{A continuous deRham anti-differential}

Let $M$ be a (second countable, Hausdorff) smooth manifold without boundary.
We give spaces of differential forms on $M$ the weak Whitney topology.
The aim of this appendix is to prove the following result which was used in the proof 
of Theorem \ref{mainthm}. 
\begin{thm}
\label{main}
For each $k\geq 1$, there exists a continuous linear right inverse for the deRham differential
$d \colon \Omega^{k-1}(M) \to d(\Omega^{k-1}(M))$.
\end{thm}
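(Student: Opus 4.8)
The plan is to reduce the construction of a continuous linear right inverse to two elementary ingredients---the Poincar\'e homotopy operator on contractible charts and the partition-of-unity contraction of the \v{C}ech complex---and to combine them by the standard \v{C}ech--de Rham zig-zag, taking care that every operation is linear and that all the sums involved are locally finite, so that the resulting operator is continuous for the weak Whitney topology.

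First I would observe that it suffices to invert $d$ on the exact forms, which are automatically closed. On a chart $U$ diffeomorphic to a convex open subset of $\R^n$, the Poincar\'e operator $K\colon\Omega^q(U)\to\Omega^{q-1}(U)$, given by integrating the contraction with the Euler vector field along radial dilations, is linear and continuous and satisfies $dK+Kd=\id$ in positive degree; in particular $dK\omega=\omega$ for every closed $\omega$. Fix a locally finite \emph{good} cover $\U=\{U_i\}$ of $M$ by connected charts of this type (all nonempty finite intersections contractible, e.g. the geodesically convex cover of some metric), together with a subordinate partition of unity $\{\rho_i\}$. Since each $\Omega^q$ is a fine sheaf, the partition of unity provides the usual contracting homotopy $h$ of the \v{C}ech complex $C^\bullet(\U,\Omega^q)$, with $\delta h+h\delta=\id$ in positive \v{C}ech degree. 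Because $\U$ is locally finite, on every compact subset of $M$ only finitely many indices contribute, so $K$, $h$, $\delta$ and multiplication by the $\rho_i$ are all continuous for the weak Whitney topology.

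Next I would run the descent. Given an exact $k$-form $\omega$, applying $K$ on each chart gives $\gamma_0\in C^0(\U,\Omega^{k-1})$ with $d\gamma_0=\omega$; the coboundary $\delta\gamma_0$ is closed, so a further application of $K$ produces $\gamma_1\in C^1(\U,\Omega^{k-2})$, and iterating yields $\gamma_p\in C^p(\U,\Omega^{k-1-p})$ with $d\gamma_p=\delta\gamma_{p-1}$, terminating in a locally constant cochain $c=\delta\gamma_{k-1}\in C^k(\U,\R)$ which is a \v{C}ech cocycle representing the (vanishing) de Rham class of $\omega$. If I can choose, linearly and continuously in $c$, a \emph{locally constant} cochain $e$ with $\delta e=c$, then a single reverse pass---replacing $\gamma_{k-1}$ by $\gamma_{k-1}-e$ and then successively replacing $\gamma_p$ by $\gamma_p-d(h\widetilde{\gamma}_{p+1})$---yields a cochain $\widetilde{\gamma}_0$ with $\delta\widetilde{\gamma}_0=0$ and $d\widetilde{\gamma}_0=\omega$, that is, a genuine global primitive. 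Setting $s(\omega)=\widetilde{\gamma}_0$ gives the desired operator, which is linear and continuous because it is a composite of the operators above.

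The main obstacle is precisely the one step that the fine-sheaf homotopy $h$ cannot perform: producing the locally constant $e$ with $\delta e=c$, i.e. a continuous linear right inverse for the \v{C}ech coboundary of the \emph{constant} sheaf $\underline{\R}$ on its coboundaries. This step is unavoidable, since it encodes the (possibly infinite-dimensional) cohomology $H^\bullet(M;\R)$, and it must be solved purely combinatorially on the nerve of $\U$. I would handle it by choosing a collapsing structure on the nerve---a spanning tree of the $1$-skeleton when $k=1$, and more generally a discrete vector field or inductive collapse---and defining $e$ coordinatewise by the resulting combinatorial flow. The point is that local finiteness of $\U$ makes the nerve locally finite, so each coordinate of $e$ depends on only finitely many coordinates of $c$; this is exactly what guarantees that the combinatorial right inverse, and hence $s$, is continuous and linear. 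Checking that such a collapsing structure always exists, and that the induced splitting genuinely inverts $\delta$ on coboundaries while preserving local finiteness, is the technical heart of the argument.
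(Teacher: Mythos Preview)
Your approach is correct and is essentially the Poincar\'e dual of the paper's argument. The paper works with the \v{C}ech--de Rham double complex for \emph{compactly supported} forms, augmented on the right by $\Omega^*(M)$ (via the sum $\sum_\alpha \omega_\alpha$) and on the top by the completed \v{C}ech chain complex $\prod_\sigma \R$ (via integration over the charts); you work with the ordinary \v{C}ech--de Rham complex, augmented on the left by $\Omega^*(M)$ (via restriction) and on the bottom by the \v{C}ech cochain complex $\prod_\sigma \R$ (locally constant functions). The zig-zag and the roles of the two contracting homotopies are the same up to this reflection.

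The one place where you make life harder than necessary is the combinatorial step. You propose building a discrete vector field or inductive collapse on the nerve so that each coordinate of $e$ depends on only finitely many coordinates of $c$, and you flag the existence of such a structure as ``the technical heart''. In fact no geometry on the nerve is needed. Your \v{C}ech coboundary
\[
\delta \colon \prod_{\sigma} \R \longrightarrow \prod_{\tau} \R
\]
is exactly the $\R$-dual of the simplicial boundary $\partial \colon \bigoplus_\tau \R \to \bigoplus_\sigma \R$. Choose \emph{any} linear map $t$ with $\partial t \partial = \partial$ (such a $t$ exists because every subspace of a vector space is complemented) and set $T = t^{\vee}$. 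Then $\delta T \delta = \delta$, so $T$ is a right inverse on coboundaries, and each coordinate of $T(c)$ depends on only the finitely many coordinates of $c$ in the support of $t(e_\sigma)$, so $T$ is automatically continuous for the product topology. This is precisely how the paper disposes of the analogous step, and the argument transplants verbatim to your setting; the collapsing structure is unnecessary.

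As for what each route buys: the paper's compactly supported version gets the column contractions from the explicit Bott--Tu operators on $\Omega^*_c(\R^n)$ and, by replacing the products in the double complex with direct sums, simultaneously yields a continuous anti-differential on $\Omega^*_c(M)$. Your version uses the more familiar textbook double complex and the radial Poincar\'e operator; once the dualization trick replaces the collapsing-structure argument, the two proofs are of comparable length and difficulty.
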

The idea of the proof is suggested in \cite[p.96]{MS} (although the reference there to 
an inductive procedure seems misleading).  A similar formula for the anti-differential
 appears in the PhD Thesis of Ioan Marcut 
\cite[Section 3.4.4]{Ma} under the added assumption that the manifold $M$ is of finite type
 (the details are spelled out for $k=2$). As pointed out in \cite{Ma}, the existence of a smooth
 family of primitives for a  smooth family of exact forms on an arbitrary manifold is stated as
 \cite[Lemma, p. 617]{GLSW} and a sketch proof is also given there.

One difference between the argument we give and the one described in \cite{Ma, GLSW} 
is that we make use of the \v{C}ech-deRham complex for 
cohomology with compact supports instead of the usual \v{C}ech-deRham complex. 
The proof will consist of using explicit quasi-isomorphisms (cf. \cite[I.9]{BT}) between the deRham complex on $M$, 
the \v{C}ech-deRham double complex with compact supports and a complex for \v{C}ech homology
with real coefficients to translate the problem of finding an anti-differential to the combinatorial problem of finding a
bounding chain in the \v{C}ech complex.

\subsection{The \v{C}ech-deRham double complex with compact supports}

Suppose the dimension of the manifold $M$ is $n$ and 
let $\mathcal \U=\{U_\alpha\}_{\alpha \in I}$ be a cover of $M$ such that 
the sets $\overline{U_\alpha}$ are compact,
every non-empty intersection of open subsets in $\mathcal U$ is diffeomorphic to $\R^n$, and 
every intersection of $n + 2$ elements of $\U$ is  empty (one can, for instance, cover $M$ by the open stars of 
vertices in a smooth triangulation). We pick a total ordering of the indexing set $I$ and consider the 
\v{C}ech-deRham double complex\footnote{Our notation below is meant to suggest \v{C}ech homology 
with coefficients in a cosheaf (cf. \cite[VI.4]{Br})
even though the \v{C}ech complexes we use are a completed version where direct sums are replaced with 
products.} for compactly supported cohomology given by
\begin{equation}
\label{cech}
C^{-p,q} = {\mathaccent 20 C}_{p}(\U; \Omega_c^q) = \prod_{\alpha_0<\cdots<\alpha_p} \Omega^q_c(U_{\alpha_0\cdots\alpha_p})
\end{equation}
with $p, q \in \{0,\ldots, n\}$. We'll write $\omega=(\omega_{\alpha_0 \cdots \alpha_p})$ for an element 
of $C^{-p,q}$ and use the convention  $\omega_{\beta_0 \cdots \beta_p} = 0$ if $\beta_i = \beta_j$ for some 
$i\neq j$. Moreover, if $\sigma$ is a permutation of  $\{\alpha_0,\ldots,\alpha_p\}$ with $
\alpha_0<\ldots<\alpha_p$, we set $\lambda_{\sigma(\alpha_0)\cdots \sigma(\alpha_p)} = (-1)^{\sgn(\sigma)} \lambda_
{\alpha_0 \cdots \alpha_p}$.

The horizontal and vertical differentials $d^h \colon C^{-p,q} \to C^{-(p-1),q}$ and $d^v \colon C^{-p,q} \to
C^{-p,q+1}$ are given by the formulas
\begin{equation}
\label{horvert}
(d^h \omega)_{\alpha_0\cdots \alpha_p} = \sum_{\alpha \in I} \omega_{\alpha\alpha_0 \cdots \alpha_p},
\quad \quad
(d^v \omega)_{\alpha_0 \cdots \alpha_p} = (-1)^p d\omega_{\alpha_0\cdots\alpha_p}.
\end{equation}

We can augment the complex $C^{*,*}$ in the horizontal direction by setting $ C^{1,*} = \Omega^*(M)$
and defining  $d^v= -d$ on $C^{1,*}$, and $d^h \colon C^{0,q} \to C^{1,q}$  by the formula
$$ d^h(\omega_\alpha) = \sum_{\alpha \in I} \omega_\alpha.$$
We can also augment the complex $C^{*,*}$ in the vertical direction by 
setting
\begin{equation}
\label{vertaug}
C^{-p,n+1}={\mathaccent 20 C}_{p}(\U; \R) = \prod_{\alpha_0<\cdots <\alpha_p} \R 
\end{equation}
with horizontal differential $d^h$ still given by \eqref{horvert}, and $d^v \colon C^{-p,n} \to C^{-p,n+1}$
given by  
$$ d^v(\omega_{\alpha_0\cdots \alpha_p}) = (-1)^p
 \left(\int_{U_{\alpha_0\cdots \alpha_p}} \omega_{\alpha_0\cdots \alpha_p}\right).
$$

\begin{figure}[h]
\begin{center}
\setlength{\unitlength}{.7mm}
\begin{picture}(200,60)(0,0)

\put(170,58){\makebox{$q$}}
\put(188, 5){\makebox{$p$}}
\put(172,5){\makebox{$\scriptstyle{1}$}}
\put(164,5){\makebox{$\scriptstyle{0}$}}
\put(32,5){\makebox{$\scriptstyle{-n}$}}
\put(12,5){\makebox{$\scriptstyle{-(n+1)}$}}
\put(172,13){\makebox{$\scriptstyle{0}$}}
\put(172,50){\makebox{$\scriptstyle{n+1}$}}
\put(172,40){\makebox{$\scriptstyle{n}$}}
\put(80,25){\makebox{${\mathaccent 20 C}_{-p}( \U ;\Omega^q_c)$}}
\put(80,50){\makebox{${\mathaccent 20 C}_{-p}(\U ; \R)$}}
\put(178,25){\makebox{$\Omega^q$}}
\put(30,0){\line(0,1){55}}
\put(10,45){\line(1,0){180}}
\put(10,10){\vector(1,0){180}}
\put(170,0){\vector(0,1){55}}

\end{picture}

\caption{The augmented \v{C}ech-deRham double complex.}
\end{center}
\end{figure}
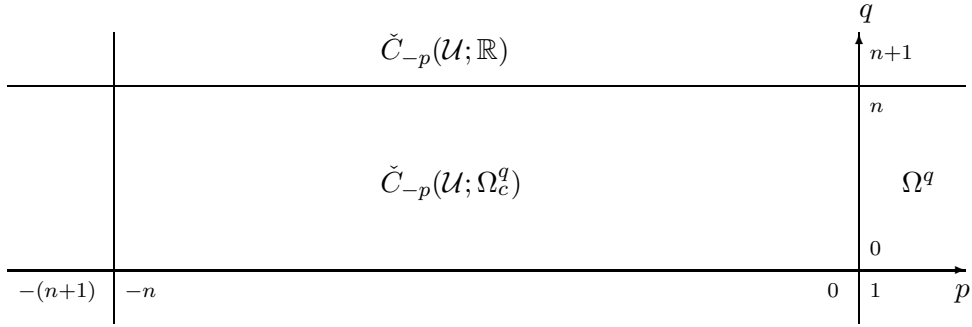

\subsection{Explicit contractions of rows and columns}

Let $\{\rho_\alpha\}_{\alpha \in I}$ be a partition of unity subordinate to the cover $\mathcal U$ and define 
$K \colon C^{-p,q} \to C^{-(p+1),q}$, for $0\leq q \leq n$ and $-n \leq  -p \leq 1$ by
\begin{equation}
\label{controws}
(K \omega)_{\alpha_0 \cdots \alpha_{p+1}} = \sum_{i=0}^{p+1} (-1)^i 
\rho_{\alpha_i} \omega_{\alpha_0 \cdots \hat{\alpha_i} \cdots \alpha_{p+1}}.
\end{equation}
One easily checks (cf. \cite[(8.7) p.95]{BT} for the case of the \v{C}ech-deRham complex with
 not necessarily compact supports) that  for each $q\in \{0,\ldots,n\}$, the operator 
 \eqref{controws} is a cochain contraction (i.e. $d^hK+Kd^h=\id$) 
 of the complexes $(C^{*,q},d^h)$ with $-n\leq * \leq 1$, 
 for each $q$ such that $0\leq q \leq n$.

Let $\pi \colon \R^n \to \R^{n-1}$ be the projection onto the first $(n-1)$-coordinates and $e(t)dt \in \Omega^1_c(\R)$
be a $1$-form with integral $1$. There are operators 
$$ \pi_* \colon \Omega^k_c(\R^n) \to \Omega^{k-1}_c(\R^{n-1}), \quad \text{ and } \quad
  e_* \colon \Omega^{k-1}_c(\R^{n-1}) \to \Omega^k_c(\R^n) $$
given respectively by integration along the fiber of $\pi$ and exterior product with $e(x_n)dx_n$. Multiplying 
by $(-1)^{*-1}$ the operator $K$ in \cite[Proposition I.4.6, p. 38]{BT}, we obtain operators 
$Q \colon \Omega^*_c(\R^n) \to \Omega^{*-1}_c(\R^n)$ such that $dQ + Qd = 1-e_*\pi_*$.
The cochain complex $\Omega^*_c(\R^n)$ can be augmented by $\R$ in degree $n+1$ by setting 
$d \colon \Omega^n_c(\R^n) \to \R$ to be $d \omega = \int_{\R^n} \omega = (\pi_*)^n \omega$, and it is 
then easy to check that $L \colon \Omega^*_c(\R^n) \to \Omega^{*-1}_c(\R^n)$ defined by 
\begin{equation}
\label{contcols}
 L = \begin{cases} \sum_{j=0}^{n-1} (e_*)^jQ(\pi_*)^j & \text{ if } * \leq n, \\
(e_*)^n & \text{ if } *=n+1,
\end{cases}
\end{equation}
(where we have identified $\R=\Omega^{n+1}_c(\R^n)$ with $\Omega^0_c(\R^0)$) 
is a cochain contraction of the augmented complex $\Omega^*_c(\R^n)$. Taking the product of these
cochain contractions we obtain cochain contractions (still denoted by $L$) of the complexes 
$(C^{-p,*},d^v)$ with $0\leq * \leq n+1$, for each $p$ such that $0 \leq p \leq n$. 

Let $C^* = \Tot (C^{*,*})$ denote the total cochain complex associated to the double complex $C^{p,q}$\
with $-n\leq p \leq 0, 0\leq q \leq n$.
Thus $C^k = \oplus_{i \leq 0} C^{i,k-i}$, and the differential in $C^*$ is given by 
$D=d^h+d^v$.  We write 
$$\omega^{(m)} \in C^{-m,k+m} \text{ for the components of } \omega \in C^k.$$

The cochain contractions of the rows and columns of the augmented double complex 
imply that we have quasi-isomorphisms
$$
{\mathaccent 20 C}_{-*}(\U;\R) \xleftarrow{I} C^* \xrightarrow{S} \Omega^*(M)
$$
given in degree $k$ by
$$ I(\omega) = \left( \int_{\R^n} \omega^{(n-k)}_{\alpha_0 \cdots \alpha_{n-k}}\right) \quad 
  \text{and} \quad S( \omega) = \sum_{\alpha \in I} \omega^{(0)}_\alpha. $$

We will use the following lemma to lift a cochain $x$ along $S$ or $I$ given we already have a lift of $dx$. 
In the statement below, the $N$-th column (with the negative of 
the vertical differential) is to be regarded as an augmentation of the double complex to its left.
 We leave the proof to the reader.
\begin{lemma}
\label{alglemma}
Let $(C^{p,q})_{p,q\in\Z}$ be a double cochain complex bounded on the right (i.e. $C^{p,q}=0$ for $p>N$)
with horizontal and vertical differentials $d^h$, $d^v$.
Assume $K\colon C^{p,*} \to C^{p-1,*}$  satisfy $d^hK+Kd^h = \id$.

Let $x \in C^{N,q}$ and suppose that $\alpha_i \in C^{N-i-1,q+i+1}$ for $i\geq 0$ are such that
\begin{enumerate} 
\item $d^h\alpha_{i+1} + d^v\alpha_i = 0$ for $i\geq 0$,
\item $d^h(\alpha_0) = - d^vx$.
\end{enumerate}
Then defining
$$ \beta_i = \sum_{j=1}^i (-1)^{j-1}(Kd^v)^{j-1}K\alpha_{i-j} + (-1)^i(Kd^v)^i Kx \in C^{N-i-1,q+i}  \text{ for } i\geq 0$$
we have
\begin{enumerate}
\item $d^h \beta_{i+1} + d^v\beta_{i} = \alpha_i$ for $i\geq 0$,
\item $d^h(\beta_0)=x$.
\end{enumerate}
\end{lemma}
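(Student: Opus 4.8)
The plan is to recognize the statement as the explicit construction of a primitive for a total cocycle. Assembling the data into $z = x + \sum_{i\geq 0}\alpha_i$ and the desired output into $w = \sum_{i\geq 0}\beta_i$, one first checks degrees: $x \in C^{N,q}$ and each $\alpha_i \in C^{N-i-1,q+i+1}$ lie in total degree $N+q$, while each $\beta_i \in C^{N-i-1,q+i}$ lies in total degree $N+q-1$. With $D = d^h + d^v$ the total differential on the (product) total complex, hypotheses (2) and (1) say precisely that $Dz = 0$ (the missing top component $d^h x$ vanishes automatically, as noted below), and conclusions (2) and (1) say precisely that $Dw = z$. Thus the lemma exhibits an explicit bounding cochain for a total cocycle, built from the horizontal contraction $K$ and the right-boundedness of the complex. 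Throughout I will use only the formal relations $(d^h)^2 = (d^v)^2 = 0$, $d^h d^v + d^v d^h = 0$, and $d^h K + K d^h = \id$.

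First I would replace the closed formula by the equivalent recursion
$$\beta_0 = Kx, \qquad \beta_{i+1} = K(\alpha_i - d^v\beta_i) \quad (i \geq 0),$$
and verify by a routine unrolling that it reproduces the stated expression for $\beta_i$; the signs $(-1)^{j-1}$ and $(-1)^i$ merely record the alternation produced by the single minus sign in $\beta_{i+1} = K\alpha_i - Kd^v\beta_i$, and the substitution telescopes into powers $(Kd^v)^m$. Conclusion (2) is then immediate: since $x\in C^{N,q}$ sits in the rightmost column, $d^h x \in C^{N+1,q} = 0$, so
$$d^h\beta_0 = d^h Kx = x - Kd^h x = x.$$
This is the one place where right-boundedness enters, and it anchors the staircase at its top rung.

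The remaining content is conclusion (1), which I would prove by induction on $i$, taking as the inductive statement the identity $d^h\beta_i = \alpha_{i-1} - d^v\beta_{i-1}$ (read as conclusion (2) when $i=0$). The mechanism is that applying $d^h K = \id - Kd^h$ to $\beta_{i+1} = K(\alpha_i - d^v\beta_i)$ returns $\alpha_i - d^v\beta_i$ exactly, \emph{provided} the argument is $d^h$-closed, and establishing this closedness is the only step demanding care. Indeed, using anticommutation and then the inductive hypothesis together with $(d^v)^2 = 0$,
$$d^h(\alpha_i - d^v\beta_i) = d^h\alpha_i + d^v d^h\beta_i = d^h\alpha_i + d^v\alpha_{i-1},$$
which vanishes by hypothesis (1) (and, in the base case $i=0$, by hypothesis (2) in the form $d^h\alpha_0 = -d^v x$). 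Hence $d^h\beta_{i+1} = \alpha_i - d^v\beta_i$, i.e. $d^h\beta_{i+1} + d^v\beta_i = \alpha_i$, closing the induction. The main obstacle, such as it is, is purely bookkeeping: one must keep the cochain to which $K$ is applied $d^h$-closed at every rung, so that $K$ acts as a genuine one-sided inverse of $d^h$; once that invariant is maintained, every remaining identity is formal.
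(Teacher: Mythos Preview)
Your argument is correct. The recursion $\beta_0 = Kx$, $\beta_{i+1} = K(\alpha_i - d^v\beta_i)$ does unroll to the closed formula given in the statement, and your inductive proof that the argument of $K$ stays $d^h$-closed at each step is exactly the point that makes $d^hK = \id - Kd^h$ act as a genuine left inverse. The use of right-boundedness to kill $d^hx$, of anticommutativity $d^hd^v + d^vd^h = 0$ to pass $d^h$ through $d^v$, and of the hypotheses (1) and (2) to close the induction is all sound.

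As to comparison: the paper does not supply a proof of this lemma at all (it reads ``We leave the proof to the reader''), so there is nothing to compare your approach against. Your recursive reformulation is in fact the natural way to see where the closed formula comes from and makes the verification cleaner than a direct attack on the sums would be.
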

\subsection{Proof of the theorem}
We note that the cochain contractions $K$, $L$ defined in \eqref{controws} and \eqref{contcols} are continuous 
when we give $\Omega^k(M)$ the weak Whitney topology, ${\mathaccent 20 C}_p(\U;\R)$ the product
topology and ${\mathaccent 20 C}_p(\U;\Omega^q_c)$ the product topology of the weak (or strong, in fact) 
Whitney topologies. Moreover the differentials $d^h$ and $d^v$ in the augmented double complex are
also continuous with respect to these topologies.

\begin{proof}[Proof of Theorem \ref{main}]
Let $\omega \in \Omega^k(M)$ be an exact form. By Lemma \ref{alglemma} (where we take $x=\omega$ and $\alpha_i=0$), 
\begin{equation}
\label{formulagamma}
\gamma = \sum_{i=0}^{n-k} \gamma^{(i)} =  \sum_{i=0}^{n-k}  (-1)^i (Kd^v)^iK\omega \in C^k
\end{equation}
is a cocycle in $C^*$, such that $S(\gamma) = \omega$. Since $S$ and $I$ are quasi-isomorphisms, we have 
that $I(\gamma) \in  {\mathaccent 20 C}_{n-k}(\U;\R)$ is a coboundary.

We pick linear maps 
$$
T \colon {\mathaccent 20 C}_{p-1}(\U;R) \to {\mathaccent 20 C}_{p}(U;\R)
$$
such that $d^h T d^h = d^h$ and each component of 
$T(c) \in {\mathaccent 20 C}_p(\U;\R) = \prod_{\beta_0 < \cdots < \beta_p} \R$
 depends only on \emph{finitely} many components of $c \in \prod_{\alpha_0 < \cdots < \alpha_{p-1}} \R$ 
 (so $T$ is continuous for the product topologies).
This is possible because the complex ${\mathaccent 20 C}_*(\U;\R)$ is the $\R$-dual of a chain 
complex
$$ \cdots \leftarrow \oplus_{\alpha_0 < \cdots < \alpha_{p-1}} \R \xrightarrow{\partial} \oplus_{\alpha_0 
< \cdots < \alpha_p}  \R \leftarrow \cdots$$
It suffices to pick any map $t$ such that $\partial t \partial = \partial$ and take $T = t^{\vee}$.

We can now apply Lemma \ref{alglemma} (with the horizontal and vertical axis interchanged) to 
$x=TI(\gamma)$ and $\alpha_i = (-1)^{n-k-1}\gamma^{(n-k-i)}$. It tells us that 
$$ \delta = \sum_{i=0}^{n-k+1} \delta^{(i)} = \sum_{i=0}^{n-k+1}  \left( \sum_{j=1}^{n-k+1-i} (-1)^{j-1} (Ld^h)^{j-1}L
\gamma^{(i+j-1)} + (-1)^{i}(Ld^h)^{n-k+1-i} LTI\gamma \right) $$
is a cochain in $C^{k-1}$ with $D \delta = \gamma$. The sought after primitive of $\omega$ is therefore
$S(\delta)$ which is explicitly given by the following formula in terms of the differentials in the extended double 
complex and the contraction operators for rows and columns
\begin{equation}
\label{formula}
d^h \left( \sum_{j=1}^{n-k+1} (-1)^{j-1} (Ld^h)^{j-1}L \gamma^{(j-1)} +(-1)^{n-k} (Ld^h)^{n-k+1} LTd^v\gamma^{(n-k)}
\right)
\end{equation}
where the $\gamma^{(i)}$ are given by \eqref{formulagamma}. This completes the proof.
\end{proof}

\begin{remark}
\begin{enumerate}[(i)]
\item It is clear from the formula \eqref{formula} for the anti-derivative that it
takes smooth families of exact forms to smooth families of forms. 
\item Suppose $M$ is a smooth manifold with boundary. Using a collaring, it is easy to obtain a
continuous anti-differential on $\Omega^*(M)$ from the one constructed in the proof of 
Theorem \ref{main} on $\Omega^*(\inter M)$.

\item  The proof of Theorem \ref{main} also gives a continuous linear inverse for  
$d \colon \Omega^{k-1}_c(M) \to d(\Omega^{k-1}_c(M))$. 
It suffices to replace the direct products in \eqref{cech} and \eqref{vertaug} by direct
sums. The existence of the splittings $T$ is immediate in this case.

\item A continuous right inverse for the deRham differential
does not in general exist if we consider the strong Whitney topology on spaces of forms
(it suffices to consider the case when $M=\R$).
\end{enumerate}
\end{remark}


\end{document}